\DeclareMathAlphabet{\mathpzc}{OT1}{pzc}{m}{it}
\theoremstyle{plain}
\newtheorem*{maintheorem*}{Main Theorem}
\newtheorem*{thm*}{Theorem}
\newtheorem*{thma*}{Theorem A}
\newtheorem*{thmaa*}{Theorem A'}
\newtheorem*{thmb*}{Theorem B}
\newtheorem*{thmo*}{Theorem 1.1}
\newtheorem*{thmc*}{Theorem C}
\newtheorem*{thmd*}{Theorem D}
\newtheorem*{thmf*}{Theorem 4.1}
\newtheorem*{remark*}{Remark}
\newtheorem*{conjecture*}{Conjecture}
\newtheorem*{prop*}{Proposition}
\newtheorem*{lem*}{Basic Lemma}
\newtheorem{thm}{Theorem}[section]
\newtheorem{cor}[thm]{Corollary}
\newtheorem{lem}[thm]{Lemma}
\newtheorem{prop}[thm]{Proposition}
\theoremstyle{definition}
\newtheorem*{proofc*}{Proof of Theorem C}
\newtheorem{definition}[thm]{Definition}
\def\o{\mathcal{O}}
\def\bbz{\mathbb{Z}}
\def\bbf{\mathbb{F}}
\def\bbr{\mathbb{R}}
\def\bbn{\mathbb{N}}
\def\bfr{\mathfrak{B}}
\def\bfrak{\mathfrak{b}}
\def\ybf{\mathbf{y}}
\def\wbf{\mathbf{w}}
\def\vbf{\mathbf{v}}
\def\vare{\varepsilon}
\def\u{\mathcal{U}}
\def\ubf{\mathbf{u}}
\def\S{\mathfrak{S}}
\def\imp{\mbox{Im}(\phi)}
\def\char{\rm{Char}}
\def\SL{\rm{SL}}
\def\vt{\vartheta}
\def\h{\hspace{1mm}}
\def\hh{\hspace{.5mm}}
\title[Isotropic Quadratic Forms In Positive Characteristic]{\sc Unipotent Flows And Isotropic Quadratic Forms In Positive Characteristic}
\author{A.~Mohammadi}
\date{}
\address{Mathematics Dept., University of Chicago, Chicago, IL}
\email{amirmo@math.uchicago.edu}
\begin{document}
\maketitle
\begin{abstract}

\noindent
The analogous statement to Oppenheim conjecture over a local field of positive characteristic is proved. The dynamical argument is most involved in the case of characteristic 3. 

\end{abstract}


\section{Introduction} 
Let $Q$ be a real non-degenerate indefinite quadratic form in $n$ variables. Further assume $Q$ is not proportional to a form with rational coefficients . It was conjectured by Oppenheim in~\cite{O2} that if $n\geq5$ then for any $\vare>0$ there is $x\in\bbz^n-\{0\}$ such that $|Q(x)|<\vare.$ Later on in 1946 Davenport stated the conjecture for $n\geq3,$ see~\cite{D1}. However the conjecture even in the case $n\geq3$ is usually referred to as Oppenheim conjecture. If the conjecture is proved for $n_0$ then it holds for $n\geq n_0.$ Hence it is enough to show this for $n=3$. Note also that the conclusion of the theorem is false for $n=2$. Using methods of analytic number theory the aforementioned conjecture was verified for $n\geq21$ and also for diagonal forms in five variables, see~\cite{D1, D2}.

The Oppenheim conjecture in its full generality was finally settled  affirmatively by G.~A.~Margulis in~\cite{Mar1}. Margulis actually proved a reformulation of this conjecture, in terms of closure of orbits of certain subgroup of $\SL_3(\bbr)$ on the space of unimodular lattices. This reformulation (as is well-known) is due to M.~S.~Raghunathan and indeed is a special case of Raghunathan's conjecture on the closure of orbits of unipotent groups. S.~G.~Dani and G.~A.~Margulis utilized and generalized ideas in Margulis's proof of Oppenheim conjecture to get partial result in the direction of Raghunathan's conjecture, see~\cite{DM1, DM2}. Raghunathan's orbit closure conjecture was proved by M.~Ratner in a series of path breaking papers. M.~Ratner proved a conjecture of S.~G.~Dani regarding classification of invariant measures and survived the orbit closure conjecture, see~\cite{R1, R2, R3, R4}.  

Both measure rigidity conjecture and the Oppenheim conjecture can be formulated over other local fields as well. Indeed A.~Borel and G.~Prasad in~\cite{BP}, following the same strategy as in Margulis's proof of Oppenheim conjecture, proved the analogous statement in the $S$-arithmetic setting. The measure rigidity and orbit closure in the case of product of real and $p$-adic algebaric groups were also proved by G.~A.~Margulis and G.~Tomanov~\cite{MT} and independently by M.~Ratner~\cite{R5}. The situation over a local field of positive characteristic however is far from being settled. In this paper we prove the Oppenheim conjecture over a local field of positive characteristic. Indeed our dynamical statement, Theorem~\ref{orbitclosure}, is a very special case of the orbit closure conjecture.  

Let $K$ be a global function field of characteristic $\mathfrak{p}\neq2.$ Let $\nu$ be a place of $K$ and let $\o_\nu$ be the ring of $\nu$-integers in $K$. Let $K_\nu$ be the completion of $K$ with respect to $\nu.$ Then $K_\nu$ is isomorphic to the field of power series in one variable, which will be denoted by $\theta^{-1},$ whose coefficients are in a finite field i.e. $K_{\nu}=\bbf_{\mathfrak{q}}((\theta^{-1})),$ where $\mathfrak{q}$ is a power of $\mathfrak{p}.$ Throughout the paper we let $\mathfrak{o}=\bbf_{\mathfrak{q}}[[\theta^{-1}]]$ be the valuation ring of $K_{\nu}.$ We have

\begin{thm}\label{oppenheim}
Let $Q$ be a non-degenerate isotropic quadratic form over $K_{\nu}$ in $n\geq3$ variables and assume that $Q$ is not a scalar multiple of a form whose coefficients are in $K$. Then for any $\vare>0$ there exists ${\bf v}\in\mathcal{O}_\nu^n-\{0\}$ such that $|Q({\bf v})|<\vare.$   
\end{thm}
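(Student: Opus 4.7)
The plan is to deduce Theorem~\ref{oppenheim} from a dynamical orbit-closure statement (Theorem~\ref{orbitclosure}) via the Raghunathan--Margulis correspondence, adapted to positive characteristic. First I would reduce to $n=3$: given $Q$ in $n\geq 4$ variables, choose a $3$-dimensional $K$-rational subspace $V\subset K_\nu^n$ such that the restriction $Q|_V$ is still non-degenerate, isotropic, and not proportional to a form with coefficients in $K$; such $V$ exists because the ``non-$K$-proportional'' condition is generic in the Grassmannian of rational $3$-planes and non-degeneracy/isotropy are open conditions. A short vector for $Q|_V$ in the lattice $V\cap\o_\nu^n$ then solves the problem for $Q$ on $\o_\nu^n$.

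\textbf{Dynamical reformulation for $n=3$.} Fix an isotropic reference form $Q_0$ over $K_\nu$ in three variables and set $H=\mathrm{SO}(Q_0)(K_\nu)$; since $Q_0$ is isotropic, $H$ is generated by unipotent one-parameter subgroups. Write $Q=\lambda\cdot Q_0\circ g$ for some $g\in G:=\SL_3(K_\nu)$ and $\lambda\in K_\nu^{\times}$, and let $\Gamma:=\SL_3(\o_\nu)$, which is a lattice in $G$ by the function-field analogue of classical reduction theory (the ring $\o_\nu$ being discrete in $K_\nu$). Because $H$ preserves $Q_0$, for all $h\in H$, $\gamma\in\Gamma$ and $\mathbf{v}\in\o_\nu^3$ we have $|Q(\mathbf{v})|=|\lambda|\cdot|Q_0(hg\gamma\mathbf{v})|$, so the infimum of $|Q|$ on $\o_\nu^3\setminus\{0\}$ is controlled by the geometry of the orbit $H\cdot g\Gamma\subset G/\Gamma$.

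\textbf{Using the orbit-closure theorem.} Apply Theorem~\ref{orbitclosure} to $H\cdot g\Gamma$. In the closed case, $g^{-1}Hg\cap\Gamma$ is a lattice in $g^{-1}Hg$; a standard Borel-density argument combined with the fact that $\Gamma$ is Zariski dense in the $K$-points of $\SL_3$ then forces $g^{-1}Hg$ to be defined over $K$, and hence $Q$ to be proportional to a form with coefficients in $K$, contradicting the hypothesis. Therefore $H\cdot g\Gamma$ is dense in $G/\Gamma$. Because $Q_0$ is isotropic one can exhibit $x_0\in G$ so that the lattice $x_0\o_\nu^3$ carries a nonzero vector $\mathbf{w}_0$ with $|Q_0(\mathbf{w}_0)|<\vare/|\lambda|$: take $x_0$ to contract strongly along an isotropic line of $Q_0$. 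By density, choose $h\in H$ and $\gamma\in\Gamma$ with $hg\gamma$ so close to $x_0$ that the lattice $hg\cdot\o_\nu^3=hg\gamma\cdot\o_\nu^3$ contains a vector $hg\mathbf{v}$ near $\mathbf{w}_0$; this $\mathbf{v}\in\o_\nu^3\setminus\{0\}$ then satisfies $|Q(\mathbf{v})|=|\lambda|\cdot|Q_0(hg\mathbf{v})|<\vare$.

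\textbf{Main obstacle.} The substance of the paper lies in Theorem~\ref{orbitclosure} itself, the orbit-closure dichotomy for $H$ acting on $G/\Gamma$ in positive characteristic. Ratner's measure classification and the Margulis--Tomanov $S$-arithmetic orbit-closure theorem are both unavailable here, so the dichotomy has to be established directly by exploiting polynomial divergence of nearby unipotent orbits together with the algebraic structure of $G$, in the spirit of the original Margulis argument for the real Oppenheim conjecture. As flagged in the abstract, characteristic $3$ is the delicate case: the usual expansion of conjugates $h_t u h_t^{-1}$ through the exponential map collapses when the characteristic divides the binomial coefficients that drive the polynomial-divergence estimate, so a substitute mechanism for producing transverse unipotent directions from a non-closed orbit must be engineered by hand.
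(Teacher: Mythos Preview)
Your deduction of Theorem~\ref{oppenheim} from Theorem~\ref{orbitclosure} contains a genuine gap: you treat Theorem~\ref{orbitclosure} as a closed-versus-dense dichotomy for $H$-orbits, but that is not what it says. The theorem asserts only that \emph{if} $\overline{H_Qx}$ is compact \emph{then} $H_Qx$ is already closed; it gives no information whatsoever when the closure is non-compact, and in particular it never produces density of $H\cdot g\Gamma$ in $G/\Gamma$. A full Raghunathan-type orbit-closure classification in positive characteristic is not available (the paper says so explicitly in the introduction), so the step ``Therefore $H\cdot g\Gamma$ is dense'' is unjustified and your subsequent argument---approximating a lattice with a short isotropic vector---cannot get off the ground.

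The paper's reduction is the contrapositive and pivots on Mahler's compactness criterion, which you never invoke. Suppose the conclusion of Theorem~\ref{oppenheim} fails: there is $\varepsilon>0$ with $|Q(\mathbf v)|\geq\varepsilon$ for every nonzero $\mathbf v\in\o_\nu^3$. Mahler's criterion then forces $H_Q\Gamma$ to be \emph{relatively compact} in $G/\Gamma$, which places us squarely inside the hypothesis of Theorem~\ref{orbitclosure}. Hence $H_Q\Gamma/\Gamma$ is compact, so $H_Q\cap\Gamma$ is a lattice in $H_Q$; your Borel-density paragraph (which is essentially correct) then shows $H_Q$ is defined over $K$ and $Q$ is $K$-proportional, contradicting the hypothesis. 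Note also a positive-characteristic wrinkle you omit: a priori Borel density only yields that $Q$ is proportional to a form with coefficients in a \emph{purely inseparable} extension of $K$; one needs the extra observation that the coefficients already lie in $K_\nu$ to conclude they lie in $K$.

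Your ``Main obstacle'' paragraph correctly locates the real work in Theorem~\ref{orbitclosure}, though the specific difficulty in characteristic $3$ is not a collapse of the exponential map per se but rather that the quasiregular limit map $\phi$ can land in a purely inseparable direction (its derivative vanishes identically), so a single application of the polynomial-divergence construction does not yield an open set of new invariance; the paper handles this by iterating the construction over the tower $K_\nu^{(3)}\subset K_\nu$.
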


As in the real case the if Theorem~\ref{oppenheim} holds for $n_0$ then it holds for $n\geq n_0,$ so it is enough to show this for $n=3,$ see~\cite[Proposition 1.3]{BP} for a more coherent treatment of this assertion. Note also that the conclusion of the theorem is false for $n=2$. Our proof follows closely Margulis's original proof of the Oppenheim conjecture. There are several technical difficulties which occur in carrying out the proof from characteristic zero to our setting which are dealt with in the sequel. The main  difficulty however  occurs when $\mbox{Char}(K)=3.$ It was communicated to us by K.~Mallahi~Karai~\cite{K} that he has a proof in the case ${\mbox{Char}\hh K}>3$.

In section~\ref{secorbit} we state a theorem about the closure of orbits of certain groups which is equivalent to Theorem~\ref{oppenheim}. The rest of the paper then is devoted to  the proof of Theorem~\ref{orbitclosure}. We will recall some general properties from topological dynamics in section~\ref{secminimal}. In section~\ref{secstatement} we will use the polynomial like behavior of the action of unipotent groups on the space of lattices to construct a ``polynomial like" map. This construction is essential to our proof. Indeed  this kind of constructions was used in Margulis's proof of Oppenheim conjecture and in Ratner's proof of measure rigidity conjecture, see also~\cite{Rat1}. The proof of Theorem~\ref{orbitclosure} then will be completed in section~\ref{secmain}.

\textbf{Acknowledgements.} We would like to thank Professor G.~A.~Margulis for leading us into this direction of research and for many enlightening conversations in the course of our graduate studies and also after graduation to date.      


\section{Theorem~\ref{oppenheim} and flows on homogeneous spaces}~\label{secorbit}
We observed earlier that we need to prove the theorem in the case $n=3.$ Let $G=\SL_3(K_{\nu})$ and $\Gamma=\SL_3(\mathcal{O}_\nu).$ We let $\Omega$ be the space of free $\o_\nu$-modules of determinant one in $K_{\nu}^3.$ The space $G/\Gamma$ can be naturally identified with $\Omega$ in the usual way. For any $y=g\Gamma\in\Omega=G/\Gamma$ we let $G_y=\{h\in G|\h hy=y\}=g\Gamma g^{-1}$ be the stabilizer of $y$ in $G.$  For any quadratic form $Q$ on $K_{\nu}^3,$ let $H_Q$ be the subgroup of $G$ consisting of elements which preserve the form $Q.$ We have
 
\begin{thm}\label{orbitclosure}
If $x\in G/\Gamma$ such that $\overline{H_Qx}$ is compact then $H_Qx=\overline{H_Qx}.$ 
\end{thm}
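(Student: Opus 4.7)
The proof proceeds by contradiction. Assume $Y:=\overline{H_Q x}$ is compact yet $H_Q x\subsetneq Y$. By Zorn's lemma applied to the compact $H_Q$-invariant set $Y$, choose a non-empty closed $H_Q$-minimal subset $M\subseteq Y$. Because $Q$ is non-degenerate isotropic in three variables, $H_Q$ is (up to isogeny) a form of $\mathrm{PGL}_2(K_\nu)$; in particular it is generated by its $K_\nu$-one-parameter unipotent subgroups. I fix one such subgroup $U=\{u(t):t\in K_\nu\}\subseteq H_Q$, obtained from a rank-one nilpotent $N\in\mathrm{Lie}(H_Q)$ via $u(t)=I+tN$, and exploit the general topological-dynamics facts recalled in section~\ref{secminimal} to analyse the $U$-action on $M$.

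The heart of the argument is to produce two points $z,z'\in M$ that lie in distinct $H_Q$-orbits but are arbitrarily close in $G/\Gamma$. If $M\ne Y$, some $H_Q$-orbit in $Y$ accumulates on $M$ from outside, supplying such pairs; if $M=Y$ while $H_Q x\subsetneq Y$, two distinct $H_Q$-orbits in $M$ must cluster at a common limit inside $M$. In either configuration, feed $(z,z')$ into the polynomial-like map constructed in section~\ref{secstatement}: writing $z'=hz$ with $h$ near the identity, the conjugate $u(t)\,h\,u(t)^{-1}$ depends polynomially on $t\in K_\nu$. Choosing a sequence $t_n\in K_\nu$ so that this polynomial attains a prescribed bounded norm, and passing to a subsequence in the compact set $M$, one extracts an element $g\in G$ with $gz\in M$, $g\notin H_Q$, and transverse direction dictated by the leading nonvanishing coefficient of the divergence polynomial.

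The closed subgroup $L$ of $G$ that preserves $M$ thus strictly contains $H_Q$ and also contains an element whose Lie-algebra direction lies outside $\mathrm{Lie}(H_Q)$. Since $H_Q$ is a maximal connected proper closed subgroup of $\SL_3(K_\nu)$ (three-dimensional, almost simple, and not contained in any proper parabolic), the only possibility is $L=G$. Hence $M$ is $G$-invariant, forcing $M=G/\Gamma$; but $G/\Gamma$ is non-compact, which contradicts the compactness of $M\subseteq Y$. The contradiction forces $H_Q x=\overline{H_Q x}$.

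The main obstacle is the polynomial-like construction in the second step. In positive characteristic one has no exponential map and cannot borrow the standard real-variable estimates for $u(t)gu(t)^{-1}$; everything must be recast as polynomial identities over $K_\nu$, and the fact that unipotent elements have finite order interacts nontrivially with the renormalization scheme. More seriously, when $\mathrm{Char}(K)=3$, the quadratic (and sometimes higher) middle coefficients of the divergence polynomial can vanish, so the naive choice of $t_n$ produces limits still lying in $H_Q$; extracting a genuinely transverse $g$ then requires examining higher-order terms in the expansion of $u(t)hu(t)^{-1}$ and adjusting the rescaling time to select them, which is the characteristic $3$ difficulty highlighted in the introduction and the technical core of section~\ref{secmain}.
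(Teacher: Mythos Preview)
Your outline has the right shape (nearby points, polynomial divergence of a unipotent flow, contradiction), but the mechanism you describe does not match what the construction in section~\ref{secstatement} actually produces, and your proposed endgame is not the one the paper uses. The quasiregular limit $\phi$ lands in $N_G(U)$, which for $U=V_1$ is $DV$; it does \emph{not} normalize $H_Q$. Hence from $gz\in M$ with $g\notin H_Q$ you cannot conclude $gM=M$ for the $H_Q$-minimal set $M$: Corollary~\ref{minimal2} requires $g$ to normalize the group for which the set is minimal. The paper fixes this by working with a $V_1$-minimal set $Y$ inside an $H$-minimal set $X$, so that $\phi(t)\in N_G(V_1)$ genuinely gives $\phi(t)Y=Y$. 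It then never invokes maximality of $H_Q$ in $G$; instead, combining the $V_2$-type invariance of $Y$ with the $D$-invariance of $X$ (Proposition~\ref{cases}) produces a point $y\in\overline{Hx}$ with $V_2(B)y\subset\overline{Hx}$ for an open $B\subset K_\nu$, and Lemma~\ref{l;elementary} then locates a nonzero $Q$-isotropic vector in one of these lattices, which $D\subset H$ contracts to zero, contradicting compactness via Mahler.

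Your diagnosis of the characteristic~$3$ obstacle is also off. The limit never falls back into $H_Q$: the map $\phi$ takes values in $DV_2$, and $V_2\cap H=\{e\}$. The bad case is $\phi(t)=v_2(at^3)$, which is genuinely transverse to $H$, but whose image $aK_\nu^{(3)}$ is a closed proper additive subgroup of $K_\nu$, too thin to contain an open ball $B$. Section~\ref{secmain} resolves this by \emph{iterating}: regard $V^a=V_1V_2^a$ as a split unipotent group over $K_\nu^{(3)}$, rerun the entire quasiregular construction with $V^a$ in place of $V_1$ to obtain $V_2^{a'}$-invariance with $a'\notin aK_\nu^{(3)}$, and then once more to span all of $K_\nu$ over $K_\nu^{(3)}$. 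This is not a matter of selecting higher-order terms in a single expansion of $u(t)hu(t)^{-1}$; it requires repeating the limiting construction with successively larger acting groups.
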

The reduction of Theorem~\ref{oppenheim} to Theorem~\ref{orbitclosure} is well-known. This was first observed by M.~S.~Raghunathan. We will reproduce the argument in our case. 

First recall that for any closed subgroup $P\subset G$ and any $y\in G/\Gamma,$ if $Py$ is closed, then the quotient space $P/{P\cap G_y}$ and the orbit $Py$ are homeomorphic. Consequently we have $P/{P\cap G_y}$ is compact if and only if $Py$ is compact.

Let $Q$ be an indefinite non-degenerate quadratic form in 3 variables as in the statement of Theorem~\ref{oppenheim}. Assume that the assertion of Theorem~\ref{oppenheim} does not hold. Thus there exits $\vare>0$ such that $|Q({\bf v})|>\vare$ for any ${\bf v}\in\o_\nu^3-\{0\}.$ Mahler's compactness criterion now implies that $H_Q\o_\nu^3$ is relatively compact in $\Omega$ i.e. $H_{Q}\Gamma$ is relatively compact in $G/\Gamma$. By Theorem~\ref{orbitclosure} above we have $H_Q\Gamma/\Gamma$ is compact hence $H_Q/{H_Q\cap\Gamma}$ is compact. Consequently $H_Q\cap\Gamma$ is a lattice in $H_Q.$ This, in view of Borel's density theorem, implies that  $H_Q\cap\Gamma$ is Zariski dense in $H_Q.$ Thus $H_Q$ is defined over $K,$ see~\cite[AG 12, 14]{B}. Note that $[Q]=\{\lambda Q:\h\lambda\in K_\nu-\{0\}\}$ is the fix set of $H_Q$ for the natural action of $G$ on the space of non-degenerate isotropic quadratic forms in $3$ variables. Since $H_Q$ is defined over $K$ we get that $[Q]$ is the set of zeros of polynomials with coefficients in $K.$ This implies that $Q$ is a (nonzero) scalar multiple of a form with coefficients in a purely inseparable extension of $K,$ see~\cite[AG 12]{B}. However $Q$ has coefficients in $K_{\nu}$ thus $Q$ is scalar multiple of a form with coefficients in $K$ which is a contradiction. 


\section{Minimal sets}\label{secminimal}
Let $G$ be an arbitrary second countable locally compact group and let $\Omega$ be a homogeneous space for $G.$ For any closed subgroup $F\subset G$ let $N_G(F)$ denote the normalizer of $F$ in $G.$

\begin{definition}\label{minimal}
Let $F$ be a closed subgroup of $G$ and $Y$ be a closed $F$-invariant subset of $\Omega$. The subset $Y$ is called $F$-minimal if it does not contain any proper closed $F$-invariant subset i.e. $Fy$ is dense in $Y$ for any $y\in Y.$
\end{definition}
It is a consequence of Zorn's lemma that any compact $F$-invariant subset of $\Omega$ contains a compact $F$-minimal subset. Let $F$ be a closed subgroup of $G$ and $Y$ a closed $F$-minimal subset of $\Omega.$ Note that if $g\in N_G(F)$ such that $gY\cap Y\neq\emptyset$ then $gY=Y.$

\begin{lem}\label{minimal1}
Let $F\subset P$ and $F\subset P'$ be closed subgroups of $G$ and let $Y$ and $Y'$ be closed subsets of $\Omega$ and let $M\subset G$. Suppose that 
\begin{itemize}
\item[(a)] $PY=Y$ and $P'Y'=Y',$
\item[(b)] $mY\cap Y'\neq\emptyset$ for any $m\in M,$
\item[(c)] $Y$ is a compact $F$-minimal subset.
\end{itemize}
Then $hY\subset Y'$ for any $h\in N_G(F)\cap\overline{P'MP}.$
\end{lem}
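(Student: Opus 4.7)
The plan is a four-step chase of a single point through the hypothesis (b), the closure, and the minimality property, using compactness of $Y$ to pass to the limit.

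\textbf{Step 1: Extend (b) from $M$ to $P'MP$.} For any $p\in P$, $m\in M$, $p'\in P'$, I would observe that hypothesis (a) gives $pY\subseteq Y$ and $p'^{-1}Y'\subseteq Y'$, so
\[
p'mp\,Y\cap Y' \;\supseteq\; p'm\,Y \cap Y' \;=\; p'\bigl(mY \cap p'^{-1}Y'\bigr)\;\supseteq\; p'(mY\cap Y'),
\]
which is nonempty by (b). Thus $gY\cap Y'\neq\emptyset$ for every $g\in P'MP$.

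\textbf{Step 2: Pass to the closure.} Given $h\in\overline{P'MP}$, write $h=\lim_n g_n$ with $g_n\in P'MP$. By Step 1, pick $y_n\in Y$ with $z_n:=g_n y_n\in Y'$. Since $Y$ is compact (condition (c)), after passing to a subsequence $y_n\to y^\ast\in Y$, and then $z_n\to hy^\ast$. Because $Y'$ is closed, $hy^\ast\in Y'$. So $hY\cap Y'\neq\emptyset$ for every $h\in\overline{P'MP}$.

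\textbf{Step 3: Transfer $F$-minimality from $Y$ to $hY$.} Now assume $h\in N_G(F)$. Since $hFh^{-1}=F$, the homeomorphism $y\mapsto hy$ carries $F$-invariant closed subsets of $Y$ bijectively onto $F$-invariant closed subsets of $hY$ (if $Z\subseteq hY$ is closed and $F$-invariant, then $h^{-1}Z\subseteq Y$ is closed and $h^{-1}Fh=F$-invariant, hence equals $Y$ or $\emptyset$). Thus $hY$ is a compact $F$-minimal subset of $\Omega$.

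\textbf{Step 4: Conclude via minimality.} Because $F\subseteq P'$ and $P'Y'=Y'$, the set $Y'$ is $F$-invariant; combined with $F$-invariance of $hY$ from Step 3, the intersection $hY\cap Y'$ is a closed $F$-invariant subset of the $F$-minimal set $hY$. By Step 2 it is nonempty, so minimality forces $hY\cap Y'=hY$, i.e.\ $hY\subseteq Y'$, as required.

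Nothing here is truly hard; the only delicate point is Step 2, where one must be careful that compactness is applied to $Y$ (where the preimages $y_n$ live) rather than to some set whose compactness was not assumed. The $N_G(F)$-hypothesis enters only in Step 3, precisely to make $hY$ a legitimate $F$-minimal set so that the hitting statement of Step 2 can be upgraded to a containment.
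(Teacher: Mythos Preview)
Your proof is correct and follows essentially the same approach as the paper's: show that the set $\{g:gY\cap Y'\neq\emptyset\}$ contains $P'MP$ and is closed (your Steps 1--2), then use $h\in N_G(F)$ together with $F$-minimality and $F\subset P'$ to upgrade ``$hY$ meets $Y'$'' to ``$hY\subset Y'$.'' The only cosmetic difference is in the last step: the paper picks $y\in Y$ with $hy\in Y'$, notes $Fhy=hFy\subset Y'$, and takes closures using $\overline{Fy}=Y$, whereas you first transfer $F$-minimality to $hY$ and then intersect with $Y'$; these are the same argument in different packaging.
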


\begin{proof}
Define $S=\{g\in G:\h gY\cap Y'\neq\emptyset\}.$ Since $Y$ and $Y'$ are compact $S$ is a closed subset of $G.$ Note also that $P'MP\subset S$. Thus we have $\overline{P'MP}\subset S.$ Let $h\in N_G(F)\cap\overline{P'MP}$ then we have $hy=y'$ for some $y\in Y$ and $y'\in Y'.$ Since $Fh=hF$ we have $Fhy=hFy\subset Y'.$ We now take the closure and use the fact that $Y$ is $F$-minimal and get $hY\subset Y'.$
\end{proof}

\begin{cor}~\label{minimal2}
If $P=P'$ and $Y=Y'$ in the Lemma~\ref{minimal1}, then for every $h\in N_G(F)\cap\overline{PMP}$ we have $hY=Y.$ 
\end{cor}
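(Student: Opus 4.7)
The plan is to reduce Corollary~\ref{minimal2} to Lemma~\ref{minimal1} together with the $F$-minimality hypothesis on $Y$. Since the asserted conclusion $hY=Y$ is an equality, and Lemma~\ref{minimal1} only yields one inclusion, I need to argue for the reverse inclusion by a minimality argument rather than by inverting $h$ (note that $\overline{PMP}$ is not assumed to be a group, so one cannot simply invoke the lemma with $h^{-1}$).

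First, apply Lemma~\ref{minimal1} with $P'=P$ and $Y'=Y$. Hypothesis (a) of the lemma then reads $PY=Y$, hypothesis (b) reads $mY\cap Y\neq\emptyset$ for every $m\in M$, and hypothesis (c) is the $F$-minimality of $Y$ which is inherited. The conclusion gives that for every $h\in N_G(F)\cap\overline{PMP}$ one has the inclusion $hY\subset Y$.

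Next, I would upgrade this inclusion to an equality by using $F$-minimality. Since $h$ is a homeomorphism of $G/\Gamma$ (left translation) and $Y$ is compact, $hY$ is a nonempty compact, hence closed, subset of $Y$. It is also $F$-invariant: because $h\in N_G(F)$ we have $Fh=hF$, and because $F\subset P$ we have $FY\subset PY=Y$, so
\begin{equation*}
F(hY)=(Fh)Y=(hF)Y=h(FY)\subset hY.
\end{equation*}
Thus $hY$ is a nonempty closed $F$-invariant subset of the compact $F$-minimal set $Y$, and minimality forces $hY=Y$.

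There is no genuine obstacle here; the only point requiring care is to observe that $h\in N_G(F)$ is exactly what is needed to conclude that the forward image $hY$ is $F$-stable, which in turn is what lets us feed it back into the $F$-minimality of $Y$ to promote the inclusion from Lemma~\ref{minimal1} to an equality. The role of the corollary in the paper is presumably to certify that, whenever $Y$ is itself the target of the lemma, elements of $N_G(F)\cap\overline{PMP}$ act as symmetries of $Y$, and that is precisely what this argument delivers.
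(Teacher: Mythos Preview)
Your argument is correct and is essentially the route the paper intends: the corollary is stated without proof, relying on the observation recorded just before Lemma~\ref{minimal1} that for $g\in N_G(F)$ with $gY\cap Y\neq\emptyset$ one has $gY=Y$, which is exactly the $F$-minimality step you spell out. Your write-up simply unpacks that observation in the special case $hY\subset Y$, so there is nothing to add.
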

The following is a standard fact from topological dynamics about minimal sets. We recall the proof for the sake of completeness.

\begin{lem}\label{nonclosedorbit}
Let $F$ be a closed subgroup of $G$ and let $y\in\Omega$ be such that $\overline{Fy}$ is a compact $F$-minimal subset of $\Omega$ but $F/{F\cap G_y}$ is not compact (i.e. $Fy$ is not compact). Then the closure of the subset $\{g\in G-F:\h gy\in Fy\}$ contains the identity. 
\end{lem}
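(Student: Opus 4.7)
Write $L=F\cap G_y$, so the hypothesis that $Fy$ is not compact is the same as $F/L$ being non-compact. The plan is to build a sequence $\sigma_m\in G\setminus F$ with $\sigma_m\to e$ and $\sigma_m y\in Fy$; this puts $e$ in the closure of the set in question.

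\textbf{Step 1 (the heart of the argument).} I would first produce $f_m\in F$ with $f_m y\to y$ in $\Omega$ and such that the cosets $f_m L$ have no convergent subsequence in $F/L$. Suppose no such sequence exists. A standard exhaustion argument (a shrinking neighborhood basis $U_n\downarrow\{y\}$ tested against a compact exhaustion of $F/L$) then yields an open neighborhood $U$ of $y$ in $\Omega$ and a compact $C\subset F/L$ with
\begin{equation*}
\{fL:\ f\in F,\ fy\in U\}\subseteq C.
\end{equation*}
Consequently $U\cap Fy$ is contained in the compact set $Cy$ (the continuous image of $C$ in $\Omega$ under the orbit map). By $F$-minimality $Fy$ is dense in $\overline{Fy}$, so $U\cap Fy$ is dense in the open subset $U\cap\overline{Fy}$ of $\overline{Fy}$; since $Cy$ is closed this forces $U\cap\overline{Fy}\subseteq Cy\subseteq Fy$. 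Translating by $F$ (acting on $\overline{Fy}$ by homeomorphisms) upgrades this to $FU\cap\overline{Fy}\subseteq Fy$, and the reverse inclusion is automatic since $y\in U$; hence $Fy=FU\cap\overline{Fy}$ is open in $\overline{Fy}$. Then $\overline{Fy}\setminus Fy$ is closed, $F$-invariant and non-empty (since $Fy$ is not compact, so $Fy\ne\overline{Fy}$), a proper closed $F$-invariant subset, contradicting $F$-minimality.

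\textbf{Step 2 (lift to $G$).} The quotient map $G\to\Omega=G/G_y$ is open, so $f_m y\to y$ says that for every neighborhood $W$ of $e$ in $G$, $f_m\in WG_y$ for all large $m$. Choosing $W=W_m\downarrow\{e\}$, pick representatives $\sigma_m\in f_m G_y$ with $\sigma_m\in W_m$. Then $\sigma_m y=f_m y$ and $\sigma_m\to e$ in $G$.

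\textbf{Step 3 (separation from $F$).} I claim $\sigma_m\notin F$ for infinitely many $m$. Assume the contrary: eventually $\sigma_m\in F$, hence $\sigma_m^{-1}f_m\in G_y\cap F=L$, giving $f_m L=\sigma_m L$. But $\sigma_m\to e$ in the closed subgroup $F$, so by continuity of the projection $F\to F/L$ we get $\sigma_m L\to L$ in $F/L$, contradicting the divergence of $f_m L$ established in Step 1. On the subsequence with $\sigma_m\notin F$ we have $\sigma_m\in G-F$, $\sigma_m y=f_m y\in Fy$ and $\sigma_m\to e$, completing the proof.

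The main obstacle is Step 1: one must convert the abstract non-compactness of the quotient $F/L$ into a dynamical statement. The mechanism is that the absence of a ``divergent return'' sequence $f_m$ would make $Fy$ an open $F$-invariant neighborhood of itself in its compact closure, and $F$-minimality immediately forbids a proper closed $F$-invariant piece to be left over.
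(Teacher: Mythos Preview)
Your proof is correct. It shares the same basic ingredients as the paper's argument (minimality of $\overline{Fy}$, non-compactness of $F/(F\cap G_y)$, and the openness of $G\to\Omega$), but it is organised differently. The paper argues by contradiction on the conclusion itself: assuming $e$ has a relatively compact neighborhood $\mathcal{U}$ with $Fy\cap\mathcal{U}y=(F\cap\mathcal{U})y$, it exhausts $F=\bigcup_n F_n$ by compacta, picks $z_n\in Fy$ with $F_n z_n\cap(F\cap\mathcal{U})y=\emptyset$, and then uses minimality on a limit point $z$ of the $z_n$ to force $y\in\overline{\bigcup_n F_n z_n}$, contradicting $F_n z_n\cap\mathcal{U}y=\emptyset$. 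You instead isolate a purely dynamical statement first (Step~1: there exist returns $f_m y\to y$ with $f_m L\to\infty$ in $F/L$), derive it by showing that its failure makes $Fy$ open in $\overline{Fy}$ and hence leaves a nonempty proper closed $F$-invariant complement, and only afterwards lift to $G$ and separate from $F$. Your route is a bit more modular and makes the ``long return'' phenomenon explicit; the paper's route is shorter and never names the sequence $f_m$, going straight to the contradiction in $\overline{Fy}$.
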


\begin{proof}
We argue by contradiction. Assume the conclusion of the lemma does not hold. So we can find a relatively compact neighborhood of the identity, $\u$ say, such that $Fy\cap\hspace{.5mm}\u y=(F\cap\hh\u)y$. Let us represent $F=\cup_{n\geq1} F_n$ where $F_n\subset F_{n+1}$ are compact subsets of $F$. Recall that $Fy$ is not compact thus for any $n\geq1$ we can find $z_n\in Fy$ such that $F_nz_n\cap (F\cap\hh\u)y=\emptyset.$ Since $F_nz_n\subset Fy$ we have $F_nz_n\cap\hh\u y=\emptyset$. Let $\Psi=\cup_n F_nz_n.$ The aforementioned properties imply that the closure of $\Psi$ does not contain $y.$ However if we replace $\{z_n\}$ by a subsequence, if necessary, we may and will assume $z_n\rightarrow z\in Y.$ Note now that $Y$ is $F$-minimal and that $Fz\subset\overline{\Psi}$ thus $y\in\overline{\Psi},$ which is a contradiction. 
\end{proof}


\section{Some preliminary statements}\label{secstatement}
For any $\mu\in K_{\nu}-\{0\}$ let us denote by $H_{\mu}$ the subgroup of $G$ consisting of elements which preserve the form $Q_{\mu}(x)=2x_1x_3-\mu x_2^2.$ Note that any isotropic non-degenerate quadratic form $Q,$ in 3 variables has a two dimensional hyperbolic  
space i.e. $Q$ is congruent to $Q_{\mu}$ for some $\mu.$ Thus there exists $g_Q\in G$ and $\mu_{Q}\in K_{\nu}-\{0\}$ such that $H_{\mu_Q}=g_QH_Qg_Q^{-1}.$ Hence it suffices to prove Theorem~\ref{orbitclosure} for $H_\mu.$
We denote $H=H_1.$ We prove Theorem~\ref{orbitclosure} in the case $\mu=1,$ the proof for arbitrary $\mu$ is identical.  Let us fix some notations to be used throughout the paper. As above let $H=H_1$ and define 
$$d(t)=\left(\begin{array}{ccc}t & 0 & 0\\ 0 & 1 & 0\\ 0 & 0 & t^{-1}\end{array}\right),\h\h\h v_1(t)=\left(\begin{array}{ccc}1 & t & t^2/2\\ 0 & 1 & t\\ 0 & 0 & 1\end{array}\right)$$
$$v_2(t)=\left(\begin{array}{ccc}1& 0 & t\\ 0 & 1 & 0\\ 0 & 0 & 1\end{array}\right)$$
Let $D(t)=\{d(t):\h t\in K_{\nu}^{\times}\}$, $V_1=\{v_1(t):\h t\in K_{\nu}\}$ and $V_2=\{v_2(t):\h t\in K_{\nu}\},$ more generally if for any subset $A\subset K_\nu$ we let $V_2(A)=\{v_2(t):\h t\in A\}.$ 
If $f:K_{\nu}\rightarrow K_{\nu}$ is a polynomial map we let $V_2(f)=V_2(\mbox{Im}(f)).$  Let
$$V=V_1\cdot V_2=\left\{\left(\begin{array}{ccc}1 & a & b\\ 0 & 1 & a\\ 0 & 0 & 1\end{array}\right):\h a, b\in K_{\nu}\right\}$$
Note that $DV$ is the normalizer of $V_1$ in $G$ and that $DV_1$ is the intersection of $H$ with the group of upper triangular matrices. Let $W$ (resp. $W^-$) be the set of strictly upper triangular (resp. lower triangular) matrices in $G,$ and also let $B$ (resp. $B^-$) be the set of upper triangular (resp. lower triangular) matrices in $G.$ For any subgroup $A$ of $W$ which is normalized by $d(\theta^3)$ we let $$A_k=d(\theta^3)^k(W(\mathfrak{o})\cap A)d(\theta^3)^{-k}$$ 
The full diagonal group will be denoted by $T.$ We let $L=B^-W_1$ where 
$$W_1=\left\{\left(\begin{array}{ccc}1& 0 & a\\ 0 & 1 & b\\ 0 & 0 & 1\end{array}\right):\h a,b\in K_{\nu}\right\}$$
this is a rational cross-section for $V_1$ in $G.$ 
Let $V_1^-,\h W_1^-,\h V_2^-$ denote the transpose of $V_1,\h W_1,\h V_2$ respectively.  

\vspace{1mm}
Let $x\in\Omega$ be such that $\overline{Hx}$ is compact and $Hx\neq\overline{Hx}.$ Fix once and for all a compact $H$-minimal subset $X$ of $\overline{Hx}$ and let $Y\subset X$ be a compact $V_1$-minimal subset.

\begin{lem}\label{Dy-compact}
Let $y\in\Omega$ be such that $Dy$ is relatively compact in $\Omega.$ Then $W\cap G_y=\{e\},$ therefore $U/{U\cap G_y}$ is not compact for any non-compact subgroup $U$ of $W.$
\end{lem}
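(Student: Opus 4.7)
The plan is to first establish the key assertion $W \cap G_y = \{e\}$; the claim about $U/(U \cap G_y)$ then follows formally, since any subgroup $U \subset W$ satisfies $U \cap G_y \subset W \cap G_y = \{e\}$, so $U/(U \cap G_y) \cong U$ is non-compact whenever $U$ is.

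To prove $W \cap G_y = \{e\}$ I would argue by contradiction, using that $D$ contracts $W$ as $|t|_\nu \to 0$. A direct matrix computation shows that conjugation by $d(t)$ multiplies the $(1,2)$, $(1,3)$ and $(2,3)$ entries of any $g \in W$ by $t$, $t^2$ and $t$ respectively; hence
\[
d(t) g d(t)^{-1} \longrightarrow e \quad \text{in } G \quad \text{as } |t|_\nu \to 0,
\]
for instance along the sequence $t_n = \theta^{-n} \in K_\nu^{\times}$.

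Now assume for contradiction that some $g \in (W \cap G_y) \setminus \{e\}$ exists. Since $g \in G_y$, one has $d(t_n) g d(t_n)^{-1} \in d(t_n) G_y d(t_n)^{-1} = G_{d(t_n) y}$, and these conjugates remain non-trivial. By the relative compactness of $Dy$, the points $d(t_n) y$ all lie in a fixed compact subset $\Omega_0 \subset \Omega$. The contradiction will then be produced by a uniform injectivity-radius argument on $\Omega_0$: there exists a neighborhood $\mathcal{U}$ of $e$ in $G$ such that $\mathcal{U} \cap G_z = \{e\}$ for every $z \in \Omega_0$. Granting this, choosing $n$ large enough that $d(t_n) g d(t_n)^{-1} \in \mathcal{U}$ puts a non-trivial element of $G_{d(t_n) y}$ inside $\mathcal{U}$, which is a contradiction.

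The only non-routine point is the existence of the uniform $\mathcal{U}$, and this is really the main (mild) obstacle. It follows from a standard compactness extraction using the discreteness of $\Gamma = \SL_3(\mathcal{O}_\nu)$ in $G$: if no such $\mathcal{U}$ existed, one could find $z_n \to z_\infty \in \Omega_0$ and non-identity $g_n \in G_{z_n}$ with $g_n \to e$; writing $z_n = h_n \Gamma$ with $h_n \to h_\infty$, the elements $\gamma_n := h_n^{-1} g_n h_n \in \Gamma$ would tend to $e$ in the discrete group $\Gamma$, forcing $\gamma_n = e$, and hence $g_n = e$, for all large $n$, a contradiction. Everything else in the proof is a direct computation.
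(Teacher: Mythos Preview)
Your argument is correct and follows exactly the paper's approach: the paper records the same two facts --- that $W$ is the contracting subgroup for $d(t)$ as $t\to 0$, and that a nontrivial stabilizer element which can be conjugated to $e$ forces the orbit to leave every compact set --- and your proof simply spells these out, supplying the injectivity-radius argument that the paper leaves implicit.
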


\begin{proof}
This is a consequence of the following two facts.

(i) $W=\{g\in G:\h d(t)g\hspace{.5mm}d(t)^{-1}\rightarrow e\h\mbox{as}\h t\rightarrow0\}$

(ii) If $\gamma\in G_y,$ $\gamma\neq e$ and $\{d_n\}$ is a sequence in $G$ such that $d_n\gamma d_n^{-1}\rightarrow e$ then the set $\{d_ny\}$ is not relatively compact in $\Omega.$
\end{proof}

Lemma~\ref{Dy-compact} has he following consequence which is a supped up version of Lemma~\ref{nonclosedorbit} in the form which needed for our construction in Lemma~\ref{gettingextrainv}. 

\begin{lem}\label{displacement}
The closure of $\{g\in G-DV:\h gY\cap Y\neq\emptyset\}$ contains $e.$
\end{lem}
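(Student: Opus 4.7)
The plan is to deduce the lemma from Lemma~\ref{nonclosedorbit}, using Lemma~\ref{Dy-compact} to rule out the possibility that the witnessing elements close to $e$ all happen to fall into the normalizer $DV$ of $V_1$.

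Fix any $y\in Y$. Since $Y\subset X\subset\overline{Hx}$ is compact and $D\subset H$, the orbit $Dy$ is relatively compact in $\Omega$, so Lemma~\ref{Dy-compact} yields $V_1\cap G_y=\{e\}$; consequently $V_1/(V_1\cap G_y)=V_1$ is non-compact, and $V_1y$ is not closed in the compact $V_1$-minimal set $Y=\overline{V_1y}$. Applying Lemma~\ref{nonclosedorbit} to $F=V_1$ produces a sequence $g_n\in G-V_1$ with $g_n\to e$ and $g_ny\in V_1y\subset Y$. In particular $g_ny\in g_nY\cap Y$, so each $g_n$ lies in $T:=\{g\in G:gY\cap Y\neq\emptyset\}$; the lemma will follow if infinitely many such $g_n$ lie outside $DV$.

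Suppose to the contrary that $g_n\in DV-V_1$ for all large $n$. Since $DV=N_G(V_1)$, each image $g_nY$ is again a compact $V_1$-minimal set; because it meets $Y$ at $g_ny$, and two $V_1$-minimal sets which intersect must coincide, $g_nY=Y$. Thus every $g_n$ lies in $N:=\{g\in G:gY=Y\}$. Using the unique near-identity decomposition $DV=D\cdot V_1\cdot V_2$ together with the commutation of $V_1$ and $V_2$, write $g_n=d_nv_1(s_n)v_2(r_n)$ with $d_n\to e$, $s_n,r_n\to 0$; right-multiplying by $v_1(-s_n)\in V_1\subset N$ yields $d_nv_2(r_n)\in N$ with $(d_n,r_n)\neq(e,0)$, since $g_n\notin V_1$.

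The main obstacle is to extract from such small nontrivial elements of $N\cap(D\cdot V_2)$ an element of $T$ close to $e$ but outside $DV$, contradicting our supposition. The plan is to exploit the non-closedness of $V_1y$: pick a return sequence $v_1(a_k)y\to y^*\in Y-V_1y$ with $|a_k|\to\infty$, and then study the elements $d_nv_2(r_n)\cdot v_1(a_k)y\in Y$. The asymmetric scaling $d(u)v_1(t)d(u)^{-1}=v_1(ut)$ versus $d(u)v_2(r)d(u)^{-1}=v_2(u^2r)$ produces a discrepancy between the $V_1$- and $V_2$-directions under $D$-conjugation; combined with Lemma~\ref{Dy-compact} applied to $V_2\subset W$, which gives $V_2\cap G_{y'}=\{e\}$ for every $y'\in\overline{Hx}$, one expects that a suitable limiting procedure on $v_1(-a_k)\cdot d_nv_2(r_n)\cdot v_1(a_k)$ (or on the corresponding orbit displacements) yields witnesses in $T-DV$ accumulating at $e$. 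Carrying out this limiting procedure -- ensuring that the limiting element stays close to $e$ while being genuinely outside $DV$ -- is the technical heart of the lemma, and is the step where the positive-characteristic setting (specifically the behaviour of the squaring map $u\mapsto u^2$) must be handled with care.
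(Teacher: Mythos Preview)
Your proposal has a genuine gap: the final paragraph is only a sketch, and the sketched mechanism cannot work as stated. The conjugation you suggest stays inside $DV$. Indeed, since $V_2$ is central in $V$ and $d(u)v_1(t)d(u)^{-1}=v_1(ut)$, one computes
\[
v_1(-a_k)\,d(u_n)\,v_2(r_n)\,v_1(a_k)=d(u_n)\,v_1\bigl(a_k(1-u_n^{-1})\bigr)\,v_2(r_n)\in DV,
\]
so no amount of $V_1$-conjugation of your elements $d_nv_2(r_n)$ ever leaves $DV$. Passing instead to ``orbit displacements'' does not help either: if $h\in DV$ with $hY=Y$, then for any $y'\in Y$ and $v\in V_1$ the element carrying $vy'$ to $hvy'$ is $hv v^{-1} \cdot v (\ldots) \in DV$ modulo $G_{y'}$, and you have no control pushing you outside $DV$ here. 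So the proposed limiting procedure cannot manufacture witnesses in $G-DV$.

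The paper's argument is structurally different and is where the positive-characteristic input actually enters. Instead of applying Lemma~\ref{nonclosedorbit} to $V_1$, one applies it to the full stabilizer $\S=\{g\in DV:gY=Y\}$, which is a closed subgroup containing $V_1$ (your observation that $g_nY=Y$ shows exactly this). If the lemma fails, every $g$ near $e$ with $gY\cap Y\neq\emptyset$ lies in $DV$, hence in $\S$; so $\{g\in G-\S:gy\in\S y\}$ does not accumulate at $e$, and the contrapositive of Lemma~\ref{nonclosedorbit} gives that $\S/(\S\cap G_y)$ is compact, i.e.\ $\Lambda=\S\cap G_y$ is a cocompact lattice in $\S$. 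The crucial step is then to show that the image of $\Lambda$ in $\S/V_1$ is discrete: this is done by a pro-$\mathfrak{p}$ argument, using that a small neighbourhood of $e$ in $DV(\mathfrak{o})$ is a pro-$\mathfrak{p}$ group and that $D$ has no nontrivial elements of $\mathfrak{p}$-power order. Discreteness of $\pi(\Lambda)$ then forces $\Lambda\cap V_1$ to be a lattice in $V_1$, contradicting Lemma~\ref{Dy-compact}. The ``care in positive characteristic'' is thus not about the squaring map but about this torsion/pro-$\mathfrak{p}$ discreteness argument, which has no analogue in your sketch.
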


\begin{proof}
Note first that since $DV=N_G(V_1)$ and $Y$ is $V_1$-minimal we have $gY=Y$ if $g\in DV$ and $gY\cap Y\neq\emptyset$. Thus we have 
$$\S=\{g\in DV:\h gY\cap Y\neq\emptyset\}=\{g\in DV:\h gY=Y\}$$
Which says $\S$ is a closed subgroup of $G$ and indeed $V_1\subset\S$. Assume now that the contrary to the lemma holds. Since $Y$ is $\S$-minimal and compact, in view of Lemma~\ref{nonclosedorbit} we get $\S/{\S\cap G_y}$ is compact. In particular $\Lambda=\S\cap G_y$ is a lattice in $\S.$ Since $V_1\subset\S$ and $V_1$ is normal in $DV=(DV_2)V_1$ we may write $\S=(\S\cap DV_2)V_1.$ Let $\S_2=\S\cap DV_2.$ We claim that $\pi(\Lambda)$ is discrete where $\pi:\S\rightarrow\S/V_1$ is the natural projection. Let us assume the claim for a second. Then we get $\Lambda V_1$ is a closed subgroup and since $\Lambda$ is a co-compact lattice in $\S$ this implies that $\Lambda\cap V_1$ is a lattice in $V_1.$ Note however that $Dy\subset X$ is relatively compact hence in view of the Lemma~\ref{Dy-compact} this is a contradiction and the lemma will be concluded. We now show the claim. First note that $(DV)(\mathfrak{o})$ has a neighborhood of identity which is a pro-$\mathfrak{p}$ group. We call this neighborhood $(DV)_{\mathfrak{p}}$ and let $\S_{2\mathfrak{p}}=\S_2\cap (DV)_{\mathfrak{p}}.$ We will show that $\pi(\Lambda)\cap\pi(\S_{2\mathfrak{p}})=\{e\}.$ Assume the contrary then there exists some  $\lambda\in\Lambda$ such that $e\neq\pi(\lambda)\in\pi(\S_{2\mathfrak{p}}).$ Note that $V_1$ has a filtration, say $\{V_{1n}\},$ by pro-$\mathfrak{p}$ groups which are all normalized by $\S_{2\mathfrak{p}}.$ Hence $\lambda\in\S_{2\mathfrak{p}}V_{1n}$ for some $n.$ This is a pro-$\mathfrak{p}$ group. The group generated by $\langle\lambda\rangle\subset\Lambda$ is a discrete subgroup of this pro-$\mathfrak{p}$-group which implies that it is a finite $\mathfrak{p}$-group. Write $\lambda=dv,$ where $d\in D$ and $v\in V.$ Note that in view of the Lemma~\ref{Dy-compact} we have $\lambda\not\in V$ thus $d=d(t)\neq e.$. Modding out by the normal subgroup $V$ of $DV$ this implies that $d$ has order a power of $\mathfrak{p}$ which implies $d=e$. This contradiction establishes the claim.
\end{proof}

We will need a construction from~\cite{MT}, see also~\cite{Mar1} and~\cite{R1, Rat1}. As was mentioned one of the main ingredients in the proof of Theorem~\ref{orbitclosure} is the polynomial like behavior of the way two unipotent orbits diverge from each other. Since one needs to deal with quasi-affine spaces rather than affine spaces we do not quite get polynomials but rather certain rational maps which become polynomial if we embed our quasi-affine space into an affine space. The following definition, from~\cite{MT}, is the precise formulation.

\begin{definition}\label{quasiregular}(cf.~\cite[Definition 5.3]{MT})
\begin{itemize}
\item[(i)]Let $\mathbb{E}$ be a $K_{\nu}$-algebraic group, $F$ a $K_{\nu}$-algebraic subgroup of $\mathbb{E}(K_{\nu})$ and $\mathbb{M}$ a $K_{\nu}$-algebraic variety. A $K_{\nu}$-rational map $f:\mathbb{M}(K_{\nu})\rightarrow\mathbb{E}(K_{\nu})$ is called $F$-{\it quasiregular} if the map from $\mathbb{M}(K_{\nu})$ to $\mathbb{V}$ given by $x\mapsto\rho(f(x)p)$ is $K_{\nu}$-regular for every $K_{\nu}$-rational representation $\rho:\mathbb{E}\rightarrow\mbox{GL}(\mathbb{V})$ and every point $p\in\mathbb{V}(K_{\nu})$ such that $\rho(F)p=p.$
\item[(ii)] Let $\mathbb{W}\subset\mathbb{E}$ be a $K_\nu$-split unipotent subgroup, see~\cite[Chapter 5]{B} for the definition and properties of split unipotent groups. (It is worth mentioning that what we need here is that as a $K_\nu$-variety, $\mathbb{W}$ is $K_\nu$-isomorphic to an affine space.) Let $E=\mathbb{E}(K_{\nu})$ and $\mathcal{W}=\mathbb{W}(K_\nu).$ A map $\phi:\mathcal{W}\rightarrow E$ is called {\it strongly} $\mathcal{W}$-{\it quasiregular} if there exist
\begin{itemize}
\item[(a)] a sequence $g_n\in E$ such that $g_n\rightarrow e.$
\item[(b)]  a sequence $\{\alpha_n:\mathcal{W}\rightarrow\mathcal{W}\}$ of $K_{\nu}$-regular maps.
\item[(c)]  a sequence $\{\beta_n:\mathcal{W}\rightarrow\mathcal{W}\}$ of $K_{\nu}$-rational maps.
\item[(d)] a Zariski open nonempty subset $\mathcal{X}\subset\mathcal{W}$
\end{itemize}
such that $\phi(u)=\lim_{n\rightarrow\infty}\alpha_n(u)g_n\beta_n(u)$ and the convergence is uniform on the compact subsets of $\mathcal{X}$
\end{itemize}
\end{definition}

\noindent
Note that if $\phi$ is strongly $\mathcal{W}$-quasiregular then it indeed is $\mathcal{W}$-quasiregular. Let $\rho:E\rightarrow\mbox{GL}(\Phi)$ be a $K_{\nu}$-rational representation and let $p\in\Phi$ be a $\mathcal{W}$-fixed vector. For any $u\in\mathcal{X}$ we have $$\rho(\phi(u))p=\lim_{n\rightarrow\infty}\rho(\alpha_n(u)g_n)p.$$ 
Identify $\mathcal{W}$ with an affine space, as we may, thanks to the fact $\mathcal{W}$ is split. The sequence $\{\psi_n:\mathcal{W}\rightarrow\Phi,\hspace{2mm}u\mapsto\rho(\alpha_n(u)g_n)p\}$ is a sequence of polynomial maps of bounded degree and also the family is uniformly bounded on compact sets so it converges to a polynomial map with coefficients in $K_{\nu}$. This says $\phi$ is $\mathcal{W}$-quasiregular.

The following is an important application of the polynomial like behavior of the action of $V_1$ on $\Omega.$ Actually later on we will need it for some other subgroup which share similar features with $V_1$ i.e. split unipotent algebraic subgroups of $G$ after change of the base field. The proof in the more general setting is the same as it is clear from the proof given here.

\begin{lem}\label{gettingextrainv}
Let $\{g_n\}\subset G-DV$ be such that $g_n\rightarrow e.$ Then $N_G(V_1)\cap\overline{V_1\{g_n\}V_1}$ contains the image of a non-constant strongly $V_1$-quasiregular map $\phi$. Furthermore $\imp\not\subset KV_1$ for any compact subset $K\subset G.$
\end{lem}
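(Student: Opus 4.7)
The plan is to follow the construction from~\cite{MT}, using the rational cross-section $L = B^{-}W_1$ for $V_1$ in $G$ together with the polynomial nature of left/right translation by $V_1$. Since $L \cdot V_1$ is Zariski-open in $G$, for each $n$ with $g_n$ sufficiently close to $e$ there is a unique factorization $v_1(s) g_n = \ell_n(s) \, v_1(\gamma_n(s))$ for $s$ in a Zariski-open subset of $K_\nu$, where $\ell_n: V_1 \to L$ and $\gamma_n: V_1 \to K_\nu$ are $K_\nu$-rational of degrees bounded independently of $n$. Setting $\beta_n(s) := v_1(-\gamma_n(s))$ gives a $K_\nu$-rational map $\beta_n: V_1 \to V_1$ with $\Lambda_n(s) := v_1(s) g_n \beta_n(s) = \ell_n(s) \in L$. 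As $g_n \to e$, the $\Lambda_n$ converge pointwise to the constant map $e$, so no useful limit is obtained directly.

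To produce a non-trivial limit I would rescale the argument. Since $g_n \notin DV$, the polynomial expansion of $\Lambda_n$ contains coefficients in the direction transverse to $V_1$ that do not identically vanish, yet they do go to zero as $n \to \infty$. Choose scalars $\tau_n \in K_\nu^{\times}$ so that the rescaled maps $s \mapsto \Lambda_n(\tau_n s)$ have uniformly bounded, non-degenerate coefficients (this amounts to a standard renormalization, possible after passing to a subsequence). By an Arzel\`a--Ascoli argument for rational maps of bounded degree on $\bba^1_{K_\nu}$, the sequence $\{\Lambda_n(\tau_n \cdot)\}$ converges uniformly on compact subsets of some Zariski-open $\mathcal{X} \subset V_1$ to a non-constant rational map $\phi: V_1 \to G$. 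With $\alpha_n(u) := v_1(\tau_n u)$ (a $K_\nu$-regular map $V_1 \to V_1$) and $\tilde\beta_n(u) := \beta_n(\tau_n u)$, the identity $\phi(u) = \lim_n \alpha_n(u) g_n \tilde\beta_n(u)$ matches Definition~\ref{quasiregular}(ii), so $\phi$ is strongly $V_1$-quasiregular, and $\imp \subset \overline{V_1 \{g_n\} V_1}$ by construction.

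The crux, and main obstacle, is to show $\imp \subset N_G(V_1) = DV$. Here I would use the structure $V_1 \subset V = V_1 V_2$ with $V$ abelian and $N_G(V_1) = DV$, so that $L \cap DV = DV_2$ is a $2$-dimensional subvariety of $L$. The transverse-to-$V_1$ contribution of $g_n$ as $g_n \to e$ has its leading-order term in $\Lambda_n$ lying along $DV_2$ (this reflects that the infinitesimal $V_1$-conjugation on a small perturbation of $e$ deposits the new direction in the tangent space of $V$, by the root structure of $V_1$ inside $G$). Choosing $\tau_n$ at the scale of this leading contribution kills the components of $\Lambda_n(\tau_n \cdot)$ transverse to $DV_2$ in the limit while retaining a non-zero component along $DV_2$; hence $\imp \subset DV_2 \subset N_G(V_1)$. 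For the final clause, $\phi(V_1) \subset L$ and $L \cap V_1 = \{e\}$ by the cross-section property, so $\imp \subset L \cap N_G(V_1) = DV_2$; a non-constant polynomial map $\bba^1_{K_\nu} \to DV_2$ has unbounded image, so $\imp$ cannot be contained in $K V_1$ for any compact $K \subset G$.
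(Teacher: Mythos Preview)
Your overall architecture matches the paper's: factor $v_1(s)g_n$ through the rational cross-section $L=B^-W_1$, rescale the parameter to prevent the $L$-component from degenerating to $e$, and pass to a limit of bounded-degree maps. The paper implements the rescaling by choosing integers $r(n)\to\infty$ so that $V_{1,r(n)}g_n$ first exits a fixed bounded neighborhood in $\rho(G)q$ (for a Chevalley representation $\rho$ realizing $V_1$ as a point-stabilizer), and sets $\alpha_n(v_1(t))=d(\theta^3)^{r(n)}v_1(t)d(\theta^3)^{-r(n)}$; this is effectively your $\tau_n$. So far so good.

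The genuine gap is your justification of $\mathrm{Im}(\phi)\subset N_G(V_1)$. You argue that ``the leading-order term in $\Lambda_n$ lies along $DV_2$ by the root structure of $V_1$,'' and that normalizing at that scale kills the components transverse to $DV_2$. This is a heuristic, not a proof: you have not shown that the degrees (in $s$) of the $DV_2$-coordinates of $\Lambda_n$ dominate those of the remaining $L$-coordinates, nor that the corresponding coefficient sizes are compatible with a single choice of $\tau_n$. In positive characteristic one should in any case be wary of infinitesimal/root-space reasoning. The paper's argument is different and clean: with the Chevalley data $(\rho,q)$ one has $N_G(V_1)=\{g:\rho(V_1)\rho(g)q=\rho(g)q\}$, and for any fixed $v_1(s)$,
\[
\rho\big(v_1(s)\,\alpha_n(v_1(t))\,g_n\big)q=\rho\big(\alpha_n\big(v_1(t+\theta^{-3r(n)}s)\big)g_n\big)q=\phi'_n\big(t+\theta^{-3r(n)}s\big).
\]
Since $g_n\to e$ forces $r(n)\to\infty$, the shift $\theta^{-3r(n)}s\to 0$, so in the limit $\rho(v_1(s))\phi'(t)=\phi'(t)$ for all $s$; i.e.\ $\phi(t)\in N_G(V_1)$. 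The mechanism is translation-invariance of the limit, not a comparison of leading orders. If you want to salvage your write-up, replace the root-structure paragraph with this shifting argument (and note explicitly that $|\tau_n|\to\infty$ because $g_n\to e$). Your treatment of the unboundedness clause is fine once $\mathrm{Im}(\phi)\subset L\cap N_G(V_1)=DV_2$ is established, since the underlying map $\phi'$ into the representation space is a non-constant polynomial.
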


\begin{proof}
Let $\{g_n\}\subset G-DV$ be such that $g_n\rightarrow e.$\vspace{1mm} We define the rational morphisms $\tilde{\phi}_n:V_1\rightarrow L$ and $\omega_n:V_1\rightarrow V_1$ to be the maps so that $v_1(t)g_n=\tilde{\phi}_n(t)\omega_n(t)$ holds for all $v_1(t)$ in a Zariski open dense subset of $V_1.$ 

By Chevalley's Theorem there exits a $K_{\nu}$-rational representation $\rho:G\rightarrow\mbox{GL}(\Phi)$ and $q\in\Phi$ such that
$$V_1=\{g\in G:\h\rho(g)q=q\}\h\h\mbox{and}\h\h N_G(V_1)=\{g\in G:\h \rho(V_1)\rho(g)q=\rho(g)q\}$$
Let $\mathcal{B}(v)\subset\rho(G)q$ be a bounded neighborhood of $q$ in V. Since on $\{g_n\}\not\in N_G(V_1)$ we have; there exists a sequence of positive integers $\{r(n)\}$ with $r(n)\rightarrow\infty$ such that $V_{1r(n)}g_nq\not\subset\mathcal{B}(q)$ and $V_{1k}g_nq\subset\mathcal{B}(q)$ for all $k<r(n),$ where
$$V_{1n}=\{v_1(t):\h t\in K_{\nu}\h\mbox{and}\h|t|<\mathfrak{q}^{3n}\}$$
For any $n\in\bbn$ let $\alpha_n:V_1\rightarrow V_1$ be the conjugation by $d(\theta^3)^{r(n)}$. Define the $K_{\nu}$-rational maps $\phi_n$ by $\phi_n=\tilde{\phi}_n\circ\alpha_n:V_1\rightarrow L.$ Let $$\phi'_n=\rho_L\circ\phi_n:V_1\rightarrow \Phi$$

We have $\phi'_n(t)=\alpha_n(v_1(t))g_nq,$ thus $\phi'_n:V_1\rightarrow \Phi$ is a $K_{\nu}$-regular map whose degree is independent of $n$. This is to say $\{\phi'_n\}$ is a set of polynomial maps of bounded degree. Using the definition of $\phi'_n$ we also have $\{\phi'_n\}$ is a uniformly bounded family of polynomials. Thus passing to a subsequence, which we will still denote it by $\phi'_n,$ there is a polynomial map $\phi':V_1\rightarrow \Phi$ such that 
$$\phi'(t)=\lim_{n\rightarrow\infty}\phi'_n(t)\hspace{3mm}\mbox{for every}\hspace{3mm}t\in K_{\nu}$$  
Note that $\phi'(e)=q$ as $g_n\rightarrow e$ and that $\phi'$ is non-constant since $g_n\not\in N_G(V_1)$.

Recall that $L$ is a rational cross-section for $G/V_1$ which contains $e.$ Thus $L$ gets mapped onto a Zariski open dense subset $\mathcal{M}$ of Zariski closure of $\rho(G)q$ and that $q\in\mathcal{M}.$ Hence we can define a $K_{\nu}$-rational map $\phi:V_1\rightarrow L$ by 
$$\phi=\rho_L^{-1}\circ\phi'$$
The construction above gives $\phi(e)=e$ and $\phi$ is non-constant.

\vspace{1mm}
We now show; the map $\phi$ satisfies the conditions of the lemma i.e. 
\begin{itemize}
\item[(i)] $\phi$ is strongly $V_1$-quasiregular 
\item[(ii)] $\mbox{Im}(\phi)\subset N_G(V_1)$.
\item[(iii)] $\imp\not\subset KV_1$ for any compact subset $K\subset G.$
\end{itemize}

Note that by above construction we have if $v_1(t)\in\phi'^{-1}(\mathcal{M})$ then
$$\phi(t)=\lim_{n\rightarrow\infty}\phi_n(t)$$
and the convergence above is uniform on the compact set of $\phi'^{-1}(\mathcal{M}).$ We have 
$$\phi_n(t)=\alpha_n(v_1(t))g_n\beta_n(t)\hspace{3mm}\mbox{where}\hspace{3mm}\beta_n(t)=\omega_n(\alpha_n(v_1(t)))^{-1}$$
Above says for $v_1(t)\in\phi'^{-1}(\mathcal{M})$ we can write
$$\phi(t)=\lim_{n\rightarrow\infty}\alpha_n(v_1(t))g_n\beta_n(t),$$ 
this establishes (i).

To prove (ii) above recall that $N_G(V_1)=\{g\in G:\h \rho(V_1)\rho(g)q=\rho(g)q\}$. We remarked above that $\phi(t)=\lim_{n\rightarrow\infty}\alpha_n(v_1(t))g_n\beta_n(t).$ Let $v_1(s)\in V_1$\vspace{1mm} be an arbitrary element we need to show $\rho(v_1(s))\rho(\phi(t))q=\rho(\phi(t))q$. Note that
$$\rho(v_1(s)\alpha_n(v_1(t))g_n)q=\rho(\alpha_n(\alpha_{n}^{-1}(v_1(s))t)g_n)q$$
The result is immediate now if we note that $\alpha_n^{-1}(v_1(s))\rightarrow e$ as $n\rightarrow\infty.$ This finishes the proof of (ii).

To see (iii) note that $\phi=\rho_L^{-1}\circ\phi'$ and $\phi'$ is a non-constant (hence unbounded) polynomial map.
\end{proof}


For later use we need to explicitly determine various polynomials which where constructed in Lemma~\ref{gettingextrainv}. Let
\begin{equation}\label{e;gn}g_n=\left(\begin{array}{ccc}a_{1n}& a_{2n} & a_{3n}\\ b_{1n} & b_{2n} & b_{3n}\\ c_{1n} & c_{2n} & c_{3n}\end{array}\right)\end{equation}
Then we have 
$$v_1(t)g_n=\left(\begin{array}{ccc}a_{1n}+b_{1n}t+c_{1n}\frac{t^2}{2}& a_{2n}+b_{2n}t+c_{2n}\frac{t^2}{2} & a_{3n}+b_{3n}t+c_{3n}\frac{t^2}{2}\\ b_{1n}+c_{1n}t & b_{2n}+c_{2n}t & b_{3n}+c_{3n}t\\ c_{1n} & c_{2n} & c_{3n}\end{array}\right)$$ 
Write $\alpha_n(v_1(t))g_n\beta_n(t)=\sigma_n(t)\vt_n(t)$ where $\sigma_n:V_1\rightarrow B^-$ and $\vt_n:V_1\rightarrow W_1$ are $K_{\nu}$-rational morphisms. That is
\begin{equation}\label{e;sigma-vt}\sigma_n(t)=\left(\begin{array}{ccc}\sigma_{11}^n(t)& 0 & 0\\ \sigma_{21}^n(t) & \sigma_{22}^n(t) & 0\\ \sigma_{31}^n(t) & \sigma_{32}^n(t) & \sigma_{33}^n(t)\end{array}\right)\h\h\mbox{and}\h\h\vt_n(t)=\left(\begin{array}{ccc}1& 0 &\vt_{13}^n(t)\\ 0 & 1 & \vt_{23}^n(t)\\ 0 & 0 & 1\end{array}\right)\end{equation}
Note that our construction above says $\phi(t)\in L\cap N_G(V_1)=DV_2$ so all we need from above are the maps $\{\sigma_{11}^n(t)\}$ and $\{\vt_{13}^n(t)\}$ which are easily calculated. Let $\alpha_n(t)=\theta^{3r(n)}t.$ Then
\begin{itemize} 
\item[($\dagger$)]$\sigma_{11}^n(t)=a_{1n}+b_{1n}\alpha_n(t)+c_{1n}\frac{\alpha_n(t)^2}{2}=a_{1n}+b'_{1n}t+c_{1n}''\frac{t^2}{2}$
\item[($\ddagger$)]$\vt_{13}^n(t)=\frac{2(v_1(\alpha_n(t))g_n)_{13}(v_1(\alpha_n(t))g_n)_{11}-((v_1(\alpha_n(t))g_n)_{12})^2}{2((v_1(\alpha_n(t))g_n)_{11})^2}=\frac{\vt_{0n}(t)}{(\sigma_{11}^n(t))^2},$ where $\vt_{0n}$ is a degree 4 polynomial.
\end{itemize}
Passing to the limit we have 
\begin{equation}\label{e;phi-expl}\phi(t)=d(\sigma(t))\h v_2(\vt_0(t))=d(\sigma(t))\h v_2\left(\frac{\vt(t)}{\sigma(t)^2}\right)\end{equation}
where $\sigma(t)$ is a polynomial of degree at most 2 and $\vt(t)$ is a polynomial of degree at most 4. Moreover $d(\sigma(0))=v_2(\vt_0(0))=e.$

In the sequel we need some more properties of the map constructed in Lemma~\ref{gettingextrainv}. Let us fix some notation. 
It follows from standard facts in algebraic group theory, see~\cite{B}, that the product map defines an isomorphism between $W^-\times T\times W$ and a Zariski open dense subset of $G$ which contains $e.$ In particular there exists an open neighborhood of the identity in $G$ such that for all $g$ in that neighborhood
$$g=W^-(g)T(g)W(g)=V_1^-(g)W_1^-(g)T(g)W_1(g)V_1(g)$$
where $V_1^-(g)\in V_1^-,\h W_1^-(g)\in W_1^-,\h T(g)\in T,\h V_1(g)\in V_1,\h W_1(g)\in W_1,\h W^-(g)=V_1^-(g)W_1^-(g)$ and $W(g)=W_1(g)V_1(g).$ The following lemma follows from a more general result proved in~\cite[Proposition 6.7]{MT} in characteristic zero case. That proof works in our setting also however our particular case here allows a hands on proof. We give this proof here for the sake of completeness and refer to~\cite[Proposition 6.7]{MT} for a conceptual proof.      

\begin{lem}\label{l;image-w}
Let $\{g_n\} \subset W_1^-B-DV$ be a sequence such that $g_n\rightarrow e.$ Let $\phi$ be the $V_1$-quasiregular map constructed in Lemma~\ref{gettingextrainv} using $\{g_n\}$. Then ${\rm Im}(\phi)\subset W.$
\end{lem}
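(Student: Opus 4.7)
The plan is to use the explicit form~\eqref{e;phi-expl} of $\phi$ and show that the polynomial $\sigma(t)$ appearing there is identically $1$; once this is done, $\phi(t)=v_2(\vt_0(t))\in V_2\subset W$ and we are finished. I exploit the hypothesis $g_n\in W_1^- B$ by combining the formula~($\dagger$) for $\sigma_{11}^n(t)$ with an analogous computation of $\sigma_{22}^n(t)$, together with the already-established constraint $\phi(t)\in L\cap N_G(V_1)=DV_2$.

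First, write $g_n=w_n^-b_n$ with $w_n^-\in W_1^-$ having free entries $x_n,y_n$ in positions $(3,1),(3,2)$ and $b_n\in B$ upper triangular. The first column of $g_n$ then reads $(a_{1n},0,x_n a_{1n})^T$, so $b_{1n}=0$ and $c_{1n}=x_n a_{1n}$. Substituting into~($\dagger$) immediately gives
$$\sigma_{11}^n(t)=a_{1n}\Bigl(1+x_n\,\theta^{6r(n)}\,\tfrac{t^2}{2}\Bigr).$$
Since $\sigma_{11}^n\to\sigma$ as polynomials of degree at most $2$ and $a_{1n}\to 1$, the scalar $C:=\lim_n x_n\theta^{6r(n)}$ exists in $K_\nu$ and $\sigma(t)=1+Ct^2/2$. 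The whole task thus reduces to showing $C=0$.

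For this I compute $\sigma_{22}^n(t)$ through the leading-principal-minor identity $\sigma_{11}^n(t)\sigma_{22}^n(t)=\det M_n(t)$, where $M_n(t)$ is the upper-left $2\times 2$ block of $\phi_n(t)$ (which coincides with that of $v_1(\alpha_n(t))g_n$, since right-multiplication by $\beta_n(t)\in V_1$ only adds a multiple of column $1$ to column $2$ and so preserves this minor). A direct computation using $g_n=w_n^-b_n$ then yields
$$\sigma_{22}^n(t)=b_{2n}\cdot\frac{1+y_n\theta^{3r(n)}t-x_n\theta^{6r(n)}t^2/2}{1+x_n\theta^{6r(n)}t^2/2}.$$
Because $\phi(t)\in DV_2$ has $(2,2)$-entry equal to $1$, we have $\sigma_{22}^n(t)\to 1$ pointwise; combined with $b_{2n}\to 1$ and $x_n\theta^{6r(n)}\to C$ this forces $y_n\theta^{3r(n)}$ to converge to some $A\in K_\nu$ and yields the polynomial identity $1+At-Ct^2/2=1+Ct^2/2$. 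Since $\mathrm{char}(K)=\mathfrak{p}\neq 2$, this gives $A=0$ and $C=0$; in particular $\sigma\equiv 1$, so $\mathrm{Im}(\phi)\subset V_2\subset W$.

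The only step with any actual computation is the explicit form of $\sigma_{22}^n(t)$, a hands-on $3\times 3$ LDU-type calculation. The conceptual point, and the reason the argument works precisely for $g_n\in W_1^- B$, is that this hypothesis forces $b_{1n}=0$, which kills the $t$-coefficient of $\sigma_{11}^n$ and lets the normalisation $\phi(t)\in DV_2$ translate into a polynomial identity of matched degree whose only solution (in characteristic $\neq 2$) is $A=C=0$; for a more general $g_n\in G-DV$, a nonzero $b_{1n}$ would produce an independent linear term in $\sigma_{11}^n$ that cannot be controlled by this comparison, and indeed $\sigma$ can then fail to be $1$.
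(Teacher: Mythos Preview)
Your argument is correct, and it takes a genuinely different route from the paper's. Both proofs aim to show $\sigma\equiv 1$ (so that $\phi(t)\in V_2\subset W$), and both begin by extracting $b_{1n}=0$ from the hypothesis $g_n\in W_1^-B$, which eliminates the linear term in $\sigma_{11}^n(t)$. From there the two arguments diverge.

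The paper works with the $(1,3)$-entry: it writes out the numerator of $\vt_{13}^n(t)$ explicitly as in~\eqref{e;vt-explicit}, observes that if $\sigma$ were non-constant then the renormalised $c_{1n}$ would have a nonzero limit, and uses $g_n\to e$ (so $c_{3n}\to 1$, $c_{2n},b_{2n},b_{3n}\to 0$) to see that the degree-$3$ or degree-$4$ coefficients of that numerator would then diverge, contradicting the convergence of $\phi_n$. You instead work with the $(2,2)$-entry: via the leading $2\times2$ minor and the factorisation $g_n=w_n^-b_n$ you obtain the clean closed form $\sigma_{22}^n(t)=b_{2n}\,\dfrac{1+y_n\theta^{3r(n)}t-x_n\theta^{6r(n)}t^2/2}{1+x_n\theta^{6r(n)}t^2/2}$, and the constraint $\phi(t)\in DV_2$ (hence $(\phi(t))_{22}=1$) forces the polynomial identity $1+At-Ct^2/2=1+Ct^2/2$, whence $C=0$ in characteristic $\neq 2$. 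Your computation is arguably tidier because the minor factors nicely through $a_{1n}b_{2n}$, whereas the paper's expansion of~\eqref{e;vt-explicit} is messier; on the other hand the paper's route stays entirely within the two quantities $\sigma_{11}^n,\vt_{13}^n$ that are already being tracked for the rest of the argument, so no new entry needs to be introduced. One small remark on exposition: when you write that the upper-left $2\times 2$ block of $\phi_n(t)$ ``coincides with that of $v_1(\alpha_n(t))g_n$'', the blocks themselves do not coincide (right multiplication by $\beta_n\in V_1$ does alter column~$2$); it is only the \emph{minor} that is preserved, as your parenthetical correctly explains.
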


\begin{proof}

The proof of Lemma~\ref{gettingextrainv} implies that $\phi(t)=\lim_n\alpha_n(v_1(t))g_n\beta_n(t)$ for all $t$ such that $v_1(t)\in(\phi')^{-1}(\mathcal{M}).$ The calculation following Lemma~\ref{gettingextrainv} gives $\phi(t)=\lim_n\sigma_n(t)\vt_n(t)$ for all $v_1(t)\in(\phi')^{-1}(\mathcal{M})$ where $\sigma_n$ and $\vt_n$ are as in~\eqref{e;sigma-vt}. As was mentioned above we need only $\sigma^n_{11}$ and $\vt^n_{13}.$ Our assumption, $g_n\in W^-_1B,$ is to say that  in~\eqref{e;gn} $b_{1n}=0$ for all $n.$ This using the above calculations implies that $\sigma^n_{11}(t)=a_{1n}+c_{1n}t^2/2$ and 
\begin{equation}\label{e;vt-explicit}\vt_{13}^n(t)=\frac{2(a_{1n}+c_{1n}\frac{t^2}{2})(a_{3n}+b_{3n}t+c_{3n}\frac{t^2}{2})-(a_{2n}+b_{2n}t+c_{2n}\frac{t^2}{2})^2}{2(a_{1n}+c_{1n}\frac{t^2}{2})^2}\end{equation}
If we expand~\eqref{e;vt-explicit}, we get: in the expression for the nominator the coefficient of $t^3$ is $b_{3n}c_{1n}-b_{2n}c_{2n}$ and the coefficient of $t^4$ is $(2c_{1n}c_{3n}-c_{2n}^2)/4.$ Now $\sigma(t)$ is non-constant if and only if $\lim_n\theta^{3r(n)}c_{1n}\neq0.$ However this in view of the fact that $g_n\rightarrow e$ implies that either $\lim_n\theta^{3r(n)}(b_{3n}c_{1n}-b_{2n}c_{2n})\neq0$ or$\lim_n\theta^{3r(n)}(2c_{1n}c_{3n}-c_{2n}^2)\neq0.$ Either case implies that  $\{\vt_{13}^n(\alpha_n(\alpha_n(t))\}$ diverges. This is a contradiction thus $\phi(t)\subset W$ for all $t$ such that $v_1(t)\in(\phi')^{-1}(\mathcal{M}).$ Since this is a Zariski dense subset of $V_1$ the lemma follows.  
\end{proof}

Let the notation and conventions be as before. Recall in particular that $x\in G/\Gamma$ such that $\overline{Hx}$ is compact and $\overline{Hx}\neq Hx.$ We fixed $X\subset\overline{Hx}$ an $H$-minimal subset and $Y\subset X$ a $V_1$-minimal subset.

\begin{prop}\label{cases}
At least one of the following holds
\begin{itemize}
\item[(i)] There exists a non-constant polynomial, $\sigma(t),$ and a polynomial of degree at most four, $\vt(t),$ such that $\phi(t)Y=Y$ where $\phi(t)=d(\sigma(t))\hh v_2\left(\frac{\vt(t)}{\sigma(t)^2}\right).$  
\item[(ii)] $V_2\hh y\subset\overline{Hx}$ for some $y\in\overline{Hx}.$ 
\item[(iii)]Characteristic of $K$ equals 3 and $Y$ is $V_2(f)$-invariant, where $f(t)=at^3$ for some $a\in K_{\nu}-\{0\}.$
\end{itemize}   
\end{prop}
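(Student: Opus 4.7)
The strategy is to apply Lemma~\ref{displacement} together with Lemma~\ref{gettingextrainv} to produce a polynomial family of stabilizers of $Y$, and then read off conclusions (i)--(iii) from the explicit formula~\eqref{e;phi-expl}.

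First I invoke Lemma~\ref{displacement} to pick a sequence $\{g_n\}\subset G-DV$ with $g_n\to e$ and $g_nY\cap Y\neq\emptyset$ for every $n$. Applying Lemma~\ref{gettingextrainv} to this sequence yields a non-constant strongly $V_1$-quasiregular map $\phi$ whose image lies in $N_G(V_1)\cap\overline{V_1\{g_n\}V_1}$. Invoking Corollary~\ref{minimal2} with $F=P=P'=V_1$, $Y=Y'$, and $M=\{g_n\}$ gives $\phi(t)Y=Y$ for every $t\in K_\nu$. By~\eqref{e;phi-expl} we have $\phi(t)=d(\sigma(t))\,v_2(\vartheta(t)/\sigma(t)^2)$ with $\deg\sigma\leq 2$, $\deg\vartheta\leq 4$, $\sigma(0)=1$, $\vartheta(0)=0$, and with the pair $(\sigma,\vartheta)$ nontrivial since $\phi$ is non-constant.

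If $\sigma$ is non-constant, conclusion~(i) is immediate. Suppose therefore $\sigma\equiv 1$, so $\phi(t)=v_2(\vartheta(t))$ with $\vartheta$ a nonzero polynomial of degree at most four satisfying $\vartheta(0)=0$. Then $V_2(\vartheta)\subseteq\mathrm{Stab}_G(Y)$, and so the closed additive subgroup $S\subseteq V_2\cong K_\nu$ generated by $\vartheta(K_\nu)$ is also contained in $\mathrm{Stab}_G(Y)\cap V_2$. I analyze $S$ via the ``linearization defect'' $P(s,t):=\vartheta(s+t)-\vartheta(s)-\vartheta(t)$, whose values lie in $S$ as differences of three values of $\vartheta$.

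If $P\not\equiv 0$ (so $\vartheta$ fails to be additive), I exploit the nonzero polynomial $P(s,t)$ by varying $s,t\in K_\nu$ to produce enough elements of $S$ to force $S=V_2$; then $V_2 Y=Y\subseteq\overline{Hx}$, and conclusion~(ii) follows by taking any $y\in Y$. If instead $\vartheta$ is additive, it is a $p$-polynomial of degree at most four, and since $\mathrm{char}(K)\neq 2$ this forces $\vartheta(t)=c_1 t+c_3 t^3$ with $c_3=0$ whenever $\mathrm{char}(K)\neq 3$. When $c_3=0$ the image $\vartheta(K_\nu)=c_1K_\nu$ equals $V_2$ and we are again in~(ii). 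In the remaining case $\mathrm{char}(K)=3$ and $c_3\neq 0$, I isolate the pure cubic component to place us in conclusion~(iii) with $f(t)=c_3 t^3$.

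The hard point is precisely this last reduction in characteristic three. Because $t\mapsto t^3$ is Frobenius, any $\mathbb{F}_3$-linear combination of values of $\vartheta=c_1 t+c_3 t^3$ collapses to $\vartheta$ evaluated at the $\mathbb{F}_3$-linear combination of inputs, and so cannot by itself separate the linear term from the cubic term of $\vartheta$. This is exactly why conclusion~(iii) must be stated separately from~(ii), and the work needed to pass from a generic additive $V_2(\vartheta)$-invariance to the pure cubic $V_2(f)$-invariance is the technical crux responsible for the distinctive difficulty of the characteristic three case flagged in the introduction.
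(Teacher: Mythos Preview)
Your setup---Lemma~\ref{displacement}, then Lemma~\ref{gettingextrainv} and Corollary~\ref{minimal2} to obtain $\phi(t)Y=Y$ with $\phi$ as in~\eqref{e;phi-expl}, then splitting on whether $\sigma$ is constant---matches the paper exactly. The gap is in your analysis when $\sigma\equiv 1$.

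You assert, without justification, that in the non-additive case the closed additive group $S$ generated by $\vartheta(K_\nu)$ is all of $K_\nu$. Worse, in the additive case $\vartheta(t)=c_1t+c_3t^3$ with $c_1,c_3\neq 0$ (characteristic~$3$) the analogous claim is \emph{false}: here $\vartheta(K_\nu)$ is already a subgroup, and it is a proper open one---for instance $t+t^3=\theta$ has no solution in $K_\nu$, since any root would satisfy $|t|^3=|\theta|$, which is impossible in the value group $\mathfrak{q}^{\mathbb{Z}}$. So you can neither conclude $V_2Y=Y$ nor ``isolate the pure cubic component'' to land in~(iii). The paper proceeds differently: whenever $\vartheta'\not\equiv 0$ (this covers both your non-additive case and the additive case with $c_1\neq 0$), the inverse function theorem produces an open ball $\mathfrak{o}'$ inside $\vartheta(K_\nu)$, hence $V_2(\mathfrak{o}')Y=Y$. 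One then \emph{blows up} via $d(\theta^3)\in D\subset H$: for $z\in Y$ and each $n$ one has $d(\theta^3)^nV_2(\mathfrak{o}')z=V_2(\theta^{6n}\mathfrak{o}')\,d(\theta^3)^nz\subset X$ by $H$-invariance of $X$, and any subsequential limit $y\in X$ of $d(\theta^3)^nz$ satisfies $V_2y\subset X\subset\overline{Hx}$, yielding~(ii). This passage through the ambient $H$-minimal set $X$, exploiting $D\subset H$, is the mechanism missing from your argument.

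Consequently case~(iii) arises precisely when $\vartheta'\equiv 0$, i.e.\ $\vartheta(t)=at^3$ already; there is nothing to isolate. The characteristic-$3$ difficulty flagged in the introduction is not in proving this proposition---whose proof is short---but in the subsequent analysis (Section~\ref{secmain}) of what to do once case~(iii) holds.
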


\begin{proof}
Using Lemma~\ref{displacement} above we can find $\{g_n\}\subset G-DV$ such that $g_n\rightarrow e$ and that $g_nY\cap Y\neq\emptyset.$ Applying Lemma~\ref{minimal2} with $P=F=V_1,$ $Y=Y$ and $M=\{g_n\}$ one has $hY=Y$ for every $h\in N_G(V_1)\cap\overline{V_1MV_1}.$ Note that we are in the situation of Lemma~\ref{gettingextrainv}, using that lemma and the calculation after loc. cit. in particular~\eqref{e;phi-expl} we have; there are polynomials $\sigma(t)$ of degree at most 2 and $\vt(t)$ of degree at most 4 such that $\phi(t)Y=Y,$ where $\phi(t)=d(\sigma(t))\h v_2\left(\frac{\vt(t)}{\sigma(t)^2}\right).$ We may assume $\sigma(t)$ is a constant polynomial else (i) above holds and there is nothing to show.

Thus we may and will assume that $\sigma(t)=\sigma(0)=1$ for all $t\in K_{\nu}$ i.e.  $\phi(t)=v_2(\vt(t)).$ There are 3 possibilities for $\vt'(t),$ the derivative $\vt(t);$
\begin{itemize}
\item[(a)] $\vt'(t)=0$ for all $t\in K_{\nu}.$ Note that $\vt(t)$ is a non-constant polynomial of degree at most 4 hence this can only happen if ${\mbox{Char}\hh K}=3$ (recall that $\char\hh K\neq2$). Since $\vt(0)=0$ this says $\vt(t)=at^3$ for some nonzero $a\in K_{\nu},$ which means case (iii) of the proposition holds.
\item[(b)] $\vt'(t)$ is a non-constant polynomial. 
\item[(c)] $\vt'(t)$ is constant but it is not zero. This in view of the same considerations as in (b) implies $\vt(t)=at^3+bt\hh$ where $a,b\in K_{\nu}$ and $b\neq0.$
\end{itemize} 
We will now show (ii) holds in either cases (b) and (c). The occurrence of these, thanks to the inverse function theorem, implies that the image of $\vt$ contains some open set. Since $V_2(\vt(t))Y=Y$ the same holds true for the group generated by $\{v_2(\vt(t)):\h t\in K_\nu\}.$ Thus we get that there exists some open neighborhood $\mathfrak{o}'$ of the origin such that $Y$ is invariant under $V_2(\mathfrak{o}').$ Let $z\in Y.$ Since $X$ is $H$-invariant and $V_2(\mathfrak{o}')z\in Y$ we have $d(\theta^3)^nV_2(\mathfrak{o}')z\subset X$ for any $n\in\bbn$. We have 
$$d(\theta^3)^nV_2(\mathfrak{o}')z=d(\theta^3)^nV_2(\mathfrak{o}')d(\theta^3)^{-n}d(\theta^3)^nz=V_{2n}d(\theta^3)^nz$$
Let $z_n=d(\theta^3)^nz.$ Since $X$ is compact there is $y\in X$ such that $z_{n}\rightarrow y.$ Note also that $V_{2n}\subset V_{2(n+1)}$ are compact sets and $V_2=\cup_{n\geq1}V_{2n}$. Thus $V_2\hh y\subset X$. 
\end{proof}

The following lemma describes the situation when (i) in Proposition~\ref{cases} holds. 

\begin{lem}\label{l;sigma-noncon}
If (i) in Proposition~\ref{cases} holds, then there exists $y\in \overline{Hx}$ and a polynomial $f$ whose derivative is non-constant such that $V_2(f)y\subset\overline{Hx}.$
\end{lem}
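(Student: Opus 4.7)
The plan is to pass to commutators of $\phi(t)$ to produce elements of $V_2$ stabilizing $Y$, then specialize to obtain a polynomial $f$ with non-constant derivative. Let $\mathcal{S}' := \{g \in G : gY = Y\}$, a closed subgroup of $G$ containing every $\phi(t)$. Since $DV_2$ is two-step solvable with commutator subgroup contained in $V_2$, each $[\phi(t_1), \phi(t_2)]$ lies in $V_2 \cap \mathcal{S}'$. A direct computation using $d(s) v_2(r) d(s)^{-1} = v_2(s^2 r)$ gives
\[
[\phi(t_1), \phi(t_2)] = v_2\bigl(F(t_1, t_2)\bigr), \qquad F(t_1, t_2) := \vt(t_1)(1 - \sigma(t_2)^2) + \vt(t_2)(\sigma(t_1)^2 - 1).
\]

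Suppose first $F \not\equiv 0$. Writing $F(t_1, t_2) = \sum_k f_k(t_2) t_1^k$, one checks by comparison of coefficients that if both $f_2$ and $f_4$ vanish identically (as polynomials in $t_2$), then $\vt = \mu(\sigma^2 - 1)$ for some $\mu \in K_\nu$; substituting back forces $F \equiv 0$, a contradiction. Hence there exists $c \in K_\nu$ with $f_2(c) \neq 0$ or $f_4(c) \neq 0$. Set $f(t) := F(t, c)$; since $\mathrm{char}\,K \neq 2$, the derivative $f'(t) = \sum_k k f_k(c) t^{k-1}$ has a nonzero coefficient of $t$ or $t^3$, so $f'$ is non-constant. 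The invariance $v_2(f(t)) Y = Y$ then yields $V_2(f) y \subseteq Y \subseteq \overline{Hx}$ for any $y \in Y$.

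For the degenerate case $F \equiv 0$, one has $\vt = \mu(\sigma^2 - 1)$ and a direct calculation gives $\phi(t) = v_2(-\mu)\, d(\sigma(t))\, v_2(\mu)$. Since $d(\theta^3)^n \in D \subseteq H$ preserves $X$, the sets $Y_n := d(\theta^3)^n Y \subseteq X$ admit a Hausdorff-convergent subsequence $Y_n \to Y_\infty \subseteq X$, and the conjugate $d(\sigma(t)) v_2\bigl(\mu\theta^{6n}(1 - \sigma(t)^{-2})\bigr)$ stabilizes $Y_n$. Let $k_0 \in \{1, 2\}$ be the order of vanishing of $\sigma - 1$ at $0$ with leading coefficient $c_{k_0} \neq 0$; substituting $t = \theta^{-6n/k_0}\tau$, the stabilizing element converges to $v_2(2\mu c_{k_0}\tau^{k_0})$ uniformly on bounded $\tau$, so in the limit $v_2(2\mu c_{k_0}\tau^{k_0}) Y_\infty = Y_\infty$ for all $\tau$. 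Provided $\mu \neq 0$: for $k_0 = 2$ the polynomial $f(\tau) := 2\mu c_{k_0}\tau^2$ has non-constant derivative (as $\mathrm{char}\,K \neq 2$) and $V_2(f) y \subseteq Y_\infty \subseteq \overline{Hx}$ for any $y \in Y_\infty$; for $k_0 = 1$ we obtain full $V_2$-invariance of $Y_\infty$ and the polynomial $f(\tau) := \tau^2$ suffices. The main obstacle is this degenerate case $F \equiv 0$, which requires the explicit factorization of $\phi$ and the rescaling-limit argument; the further sub-case $\mu = 0$ (in which $\phi$ is purely diagonal) needs a separate treatment via the $H$-minimality of $X$.
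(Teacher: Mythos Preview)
Your commutator approach is genuinely different from the paper's and, in the non-degenerate case $F\not\equiv 0$, it is cleaner: you obtain the stronger conclusion $V_2(f)Y=Y$ (not merely $V_2(f)y\subset\overline{Hx}$), and the algebra checking that $f_2\equiv 0$ and $f_4\equiv 0$ force $\vt=\mu(\sigma^2-1)$ is correct (note $s_2=\alpha^2+2\beta$ and $s_4=\beta^2$ cannot both vanish when $\sigma$ is non-constant and $\mathrm{char}\,K\neq 2$). The rescaling argument in the degenerate case with $\mu\neq 0$ is also sound.

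However, the proof is incomplete: the sub-case $\mu=0$, i.e.\ $\vt\equiv 0$ and $\phi(t)=d(\sigma(t))$, is not excluded by hypothesis~(i), and you explicitly leave it open. Commutators vanish (the image of $\phi$ lies in the abelian group $D$), and your rescaling does nothing since $D$ commutes with $d(\theta^3)^n$. Having $DY=Y$ (which is what you can extract) does not by itself produce any $V_2$-displacement. Invoking ``$H$-minimality of $X$'' is a gesture, not an argument: one still needs to manufacture an element of $V_2$ moving $Y$ into $\overline{Hx}$, and nothing in your setup supplies it.

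The paper's proof avoids this case split entirely by bringing in the global hypothesis $\overline{Hx}\neq Hx$ (not just $\phi(t)Y=Y$). From non-closedness one finds $g_n\to e$ in $G\setminus H$ with $g_ny\in\overline{Hx}$; after reducing to $g_n\in W_1^-B$ and applying the quasiregular construction (or directly, if $g_n\in HV_2$), one produces some $u_0=v_2(s_0)$ with $u_0Y\subset\overline{Hx}$. Then the identity
\[
d(\sigma(t))\,u_0\,\phi(t)^{-1}=v_2\bigl(s_0\sigma(t)^2-\vt(t)\bigr)\in H\{u_0\}P
\]
with $P=\langle R,V_1\rangle$, together with Lemma~\ref{minimal1}, gives $v_2(f_{u_0}(t))Y\subset\overline{Hx}$ for $f_{u_0}(t)=s_0\sigma(t)^2-\vt(t)$. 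Since $\sigma$ is non-constant of degree $\le 2$, $\sigma^2$ has a nonzero coefficient in degree $2$ or $4$, so a generic choice of $s_0$ makes $f_{u_0}'$ non-constant---regardless of whether $\vt\equiv 0$. The price is that $u_0$ only sends $Y$ into $\overline{Hx}$, not into $Y$; this is exactly why the paper remarks before the proof that one cannot upgrade the conclusion to $V_2y\subset\overline{Hx}$ from this lemma alone. Your approach would give that upgrade in the non-degenerate case, but breaks down precisely where the paper's external input is needed.
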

Before starting the proof it is worth mentioning that in this case we merely get $V_2(f)y\subset\overline{Hx}$ and not $V_2(f)y\subset Y$ for some $y\in Y.$ Thus we cannot replace the conclusion with the stronger assertion $V_2y\subset\overline{Hx}.$

\begin{proof}
We have $Y$ is invariant under 
$$R=\langle d(\sigma(t))\h v_2\left(\frac{\vt(t)}{\sigma(t)^2}\right):\h t\in K_{\nu}\rangle $$
where $\langle\bullet\rangle$ denotes the group generated by $\bullet.$ Recall also that  $\sigma(t)$ is a non-constant polynomial.

Recall that $Hx$ is not closed. We claim that for any $y\in Y$ the closure of the subset $M=\{g\in G-H:\h gy\in\overline{Hx}\}$ contains the identity.
Assume the contrary. In particular since $y\in\overline{Hx}$ we have; the closure of $\{g\in G-H:\h gy\in Hy\}$ does not contain the identity. Then since $y\in X$ and $X$ is $H$-minimal Lemma~\ref{nonclosedorbit} implies that $Hy$ is compact. Also note that since $y\in\overline{Hx}$ and $e\not\in\overline{M}$ we have $y\in Hx.$ These imply that $Hx$ is closed which is a contradiction. 

We now apply Lemma~\ref{minimal1} with $M,$ $Y'=\overline{Hx},\h Y,\h P'=H,\h F=V_1$ and $P=V_1.$ Hence for any $h\in N_G(V_1)\cap HMV_1$ we have $hY\subset Y'=\overline{Hx}.$ Pick $\{g_n\}\subset M$ such that $g_n\rightarrow e.$ Let us first assume that there exists some  subsequence $\{g_{n_i}\}\subset G-HV_2.$ Abusing the notation we continue to denote this subsequence by $\{g_n\}.$ For $n$ large enough we have $g_n=V_1^-(g_n)W_1^-(g_n)T(g_n)W_1(g_n)V_1(g_n).$
Multiplying on left by $H$ we may and will assume that $V_1^-(g_n)=e.$ Thus we have a sequence $\{g_n\}$ such that $g_n\rightarrow e,$ $V_1^-(g_n)=e$ and $\{g_n\}\not\subset HV_2.$ We now apply the construction of Lemma~\ref{gettingextrainv} with $\{g_n\}.$  We get a non-constant strongly $V_1$-quasiregular map $\phi_1$ such that ${\rm Im}(\phi_1)\subset N_G(V_1)$ and $\phi_1(s)Y\subset Y'$ for all $s\in K_\nu.$

Recall that $g_n\in W_1^-TW=W_1^-B$ hence Lemma~\ref{l;image-w} implies that $\mbox{Im}(\phi_1)\subset W.$ This in view of the fact $N_G(V_1)=DV$ gives $\mbox{Im}(\phi_1)\subset V_2$ i.e. $\phi_1(s)=v_2(\vt_1(s))$ and for any $s\in K_{\nu}$ we have $v_2(\vt_1(s))Y\subset X.$ Define the polynomial $f_u(t)=\vt_1(s)\sigma^2(t)-\vt(t),$ where $u=v_2(\vt_1(s)).$  Since $\vt_1(s)$ and $\sigma(t)$ are non-constant polynomials and $\mbox{deg}(\sigma(t))\leq2$, we can find $u_0=\phi_1(s_0)$ such that both $f_{u_0}$ and also its derivative $f'_{u_0}$ are non-constant polynomials. 

We now turn to the case $\{g_n\}\subset HV_2.$ Thus there are infinitely many elements $v_n=v_2(t_n)\in V_2$ such that $v_ny\in\overline{Hx}$. Thus we may find $u_0=v_2(t_{k})$ for some $k$ such that $f_{u_0}(t)=t_k\sigma^2(t)-\vt(t)$ and $f'_{u_0}$ are both non-constant polynomials. 

Now apply Lemma~\ref{minimal1} with $M=\{u_0\}$ which is defined above, $Y'=\overline{Hx},\h Y,\h P'=H,\h F=V_1$ and $P=\langle R,V_1\rangle,$ the closed group generated by $R$ and $V_1.$ According to Lemma~\ref{minimal1} we have $hY\subset\overline{Hx}$ for any $h\in N_G(V_1)\cap\overline{HMP}.$ Note that for any $t$ one has
$$d(\sigma(t))\hh u_0\left[d(\sigma(t))\hspace{.5mm} v_2\left(\frac{\vt(t)}{\sigma(t)^2}\right)\right]^{-1}=v_2(f_{u_0}(t))\in HMP$$
Thus $v_2(f_{u_0}(t))Y\subset\overline{Hx}$ for any $t\in K_{\nu}.$
\end{proof}


\section{Proof of Theorem~\ref{orbitclosure}}\label{secmain}

We will prove Theorem~\ref{orbitclosure} in this section using the construction in Section~\ref{secstatement}. We will first reduce the proof of Theorem~\ref{orbitclosure} to the ``inseparable" case. The existence of this inseparable case in some sense is the main difference with the proof in the characteristic zero case. We begin with the following elementary observation

\begin{lem}\label{l;elementary}
Let $L$ be a lattice in $\Omega$ and $B(t_0,r_0)$ an open ball of radius $r_0$ about $t_0$ in $K_\nu.$ Then there exists some nonzero $\vbf\in V_2(B(r_0,t_0))L$ such that $Q(\vbf)=0.$
\end{lem}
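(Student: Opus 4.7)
The plan is to reduce the claim to a density statement about projective points of $L$ in $\mathbb{P}^{2}(K_\nu)$, and then to invoke density of the global field $K$ in its completion. Writing $\wbf=(w_1,w_2,w_3)\in L$ and using $v_2(t)\wbf=(w_1+tw_3,w_2,w_3)$, one computes
$$Q(v_2(t)\wbf)=Q(\wbf)+2tw_3^{2}.$$
Hence for any $\wbf\in L$ with $w_3\neq0$, the vector $v_2(t)\wbf$ is isotropic exactly when $t=\Phi(\wbf):=(w_2^{2}-2w_1w_3)/(2w_3^{2})$. It will therefore suffice to produce $\wbf\in L$ with $w_3\neq0$ and $\Phi(\wbf)\in B(t_0,r_0)$: the vector $\vbf:=v_2(\Phi(\wbf))\wbf$ will then be the desired nonzero isotropic element of $V_2(B(t_0,r_0))L$.

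I would next argue density of $\Phi(L\cap\{w_3\neq0\})$ in $K_\nu$ as follows. Since $\Phi$ is invariant under $K_\nu^{\times}$-scaling, it descends to a map on $\mathbb{P}(L)\cap\{w_3\neq0\}$. In the affine chart $(u,v)=(w_1/w_3,w_2/w_3)$ identifying $\mathbb{P}^{2}(K_\nu)\setminus\{w_3=0\}$ with $K_\nu^{2}$, this descended map is the polynomial $(u,v)\mapsto v^{2}/2-u$; its partial derivative in $u$ equals $-1$, which is nonzero because $\mbox{char}\,K\neq2$, so the map is an open surjection onto $K_\nu$. The problem therefore reduces to proving that $\mathbb{P}(L)$ is dense in $\mathbb{P}^{2}(K_\nu)$.

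For the final density statement, write $L=g\cdot\o_\nu^{3}$ with $g\in\SL_3(K_\nu)$, using that $L$ is a point of $\Omega=G/\Gamma$. Because $K$ is the fraction field of $\o_\nu$, clearing denominators shows $\mathbb{P}(\o_\nu^{3})=\mathbb{P}^{2}(K)$. By definition of the completion, $K$ is dense in $K_\nu$, and consequently $\mathbb{P}^{2}(K)$ is dense in $\mathbb{P}^{2}(K_\nu)$; applying the homeomorphism of $\mathbb{P}^{2}(K_\nu)$ induced by $g$ yields density of $\mathbb{P}(L)=g\cdot\mathbb{P}^{2}(K)$, which completes the plan. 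The argument is entirely formal and I do not anticipate a genuine obstacle: the one conceptual step is the observation that, thanks to scaling invariance of $\Phi$, the orbit question for $V_2$ collapses into a density statement about the projective lattice $\mathbb{P}(L)$, which in turn is controlled by the standard fact that a global field is dense in its completion.
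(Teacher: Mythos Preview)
Your argument is correct, but it proceeds along a genuinely different line from the paper's.  The paper's proof is purely metric: it uses that a lattice $L\subset K_\nu^3$ has bounded covering radius, so one can choose a specific point $\wbf_r=(-t_0w_3,0,w_3)$ with $|w_3|$ large enough that the map $q(\ubf)=-Q(\ubf)/(2u_3^2)$ sends the ball $B(\wbf_r,r)$ into $B(t_0,r_0)$, and then pick a lattice vector $\ybf$ in that ball.  No arithmetic input about $K$ or $\o_\nu$ is used beyond the fact that $L$ is discrete and cocompact.  Your proof, by contrast, trades the covering-radius estimate for an arithmetic density statement: you observe that $\Phi$ is scale-invariant, pass to $\mathbb{P}^2$, and then invoke density of $K$ in $K_\nu$ to get density of $\mathbb{P}(L)=g\cdot\mathbb{P}^2(K)$ in $\mathbb{P}^2(K_\nu)$.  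This is perfectly valid in the paper's setting since every $L\in\Omega$ is of the form $g\o_\nu^3$ and $\o_\nu$ has fraction field $K$; it is arguably more conceptual, though it relies on the specific arithmetic origin of $L$ rather than just its lattice structure.  Two small remarks: the nonvanishing of the $u$-derivative has nothing to do with $\mathrm{char}\,K\neq2$ (that hypothesis enters only through the factor $1/2$ in $Q$), and you actually need only continuity and surjectivity of $(u,v)\mapsto v^2/2-u$, not openness, since what is used is that $\Phi^{-1}(B(t_0,r_0))$ is a nonempty open set meeting the dense image of $L$.
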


\begin{proof}
Indeed we may and will assume $r_0<1.$ Let $B=B(t_0,r_0).$ For any $\wbf\in K_\nu^3$ and $r>0$ let $B(\wbf,r)$ denote the ball of radius $r$ about $\wbf$ in $K_\nu^3.$ For any $\ubf=(u_1,u_2,u_3)\in K_\nu^3$ with $u_3\neq0$ define $q(\ubf)=-\frac{Q(\ubf)}{2u_3^2}.$ Fix some $\wbf_r=(-t_0w_3,0,w_3)$ with $|w_3|>\max\{r/r_0,|t_0|r/r_0\}.$ Then the map $q:B(\wbf_r,r)\rightarrow K_\nu$ is defined on $B(\wbf_r,r)$ and $\mbox{Im}(q)\subset B.$ Since $L$ is a lattice in $K_\nu^3$ there exists some $r=r(L)>0$ such that $L\cap B(\wbf,r)\neq\emptyset$ for all $\wbf\in K_\nu^3.$ Let $\wbf_r$ be given as above corresponding to this $r.$ Let $\ybf\in L\cap B(\wbf_r,r).$ Then $y_3\neq0$ and $s=q(\ybf)=-\frac{Q(\ybf)}{2y_3^2}\in B.$ 

For any $\ubf\in K_{\nu}^3$ we have 
$$v_2(t)\mathbf{u}=v_2(t)\left(\begin{array}{l} u_1\\ u_2\\ u_3\end{array}\right)=\left(\begin{array}{c} u_1+tu_3\\ u_2\\ u_3\end{array}\right)$$
Hence $Q(v_2(t)\mathbf{u})=2tu_3^2+Q(\mathbf{u}).$ Thus $Q(v_2(s)\ybf)=0.$
\end{proof}

\begin{cor}~\label{easycases}
Theorem~\ref{orbitclosure} holds if (i) or (ii) in Proposition~\ref{cases} holds. In particular Theorem~\ref{orbitclosure} holds if $\char\hh K>3.$ 
\end{cor}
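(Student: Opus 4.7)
I would prove Corollary~\ref{easycases} by contradiction: assume $\overline{Hx}$ is compact and $Hx\neq\overline{Hx}$ (so the setup of Section~\ref{secstatement} applies and Proposition~\ref{cases} is available), together with the hypothesis that case (i) or case (ii) of Proposition~\ref{cases} holds. The overall plan is to manufacture a sequence of nonzero vectors lying in lattices of $\overline{Hx}$ whose norms tend to $0$, contradicting Mahler's compactness criterion.

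\textbf{Step 1: unify (i) and (ii).} I would first show that in either case there exists $y\in\overline{Hx}$ with $V_2 y\subset\overline{Hx}$. Case (ii) is this assertion directly. For case (i), Lemma~\ref{l;sigma-noncon} produces $z_0\in\overline{Hx}$ and a polynomial $f$ with non-constant derivative such that $V_2(f)z_0\subset\overline{Hx}$. Since $f'$ is non-constant, there is $t_0\in K_\nu$ with $f'(t_0)\neq 0$, and the inverse function theorem in the non-Archimedean local field $K_\nu$ then shows that $\mathrm{Im}(f)$ contains an open ball around $f(t_0)$. Translating by $v_2(f(t_0))\in V_2$ yields $V_2(B(0,\rho))z\subset\overline{Hx}$ for some $z\in\overline{Hx}$ and $\rho>0$. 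Conjugating by $d(\theta^3)^n\in D\subset H$ rescales this parameter ball by $|\theta|^{6n}$, so $V_2(B(0,|\theta|^{6n}\rho))\,d(\theta^3)^n z\subset\overline{Hx}$; passing to a convergent subsequence $d(\theta^3)^n z\to y$ in the compact set $\overline{Hx}$ and noting that every fixed ball is eventually contained in the inflated ones delivers $V_2 y\subset\overline{Hx}$. This is the same $d(\theta^3)^n$-inflation used at the end of the proof of Proposition~\ref{cases}.

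\textbf{Step 2: violate Mahler.} The second-coordinate projection $\pi_2\colon L_y\to K_\nu$ is an $\mathcal{O}_\nu$-module homomorphism whose image is a nonzero, finitely-generated, torsion-free submodule of the one-dimensional $K_\nu$-vector space $K_\nu$; hence $\pi_2(L_y)$ is free of $\mathcal{O}_\nu$-rank exactly one, and by rank-nullity the kernel $L_y\cap\{u_2=0\}$ is free of rank two over $\mathcal{O}_\nu$. A further rank-nullity applied to the third-coordinate projection on this rank-$2$ sublattice forces a nonzero element $\mathbf{u}=(u_1,0,u_3)\in L_y$ with $u_3\neq 0$ (otherwise the rank-$2$ sublattice would sit inside the line $\{u_2=u_3=0\}$). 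Setting $s=-u_1/u_3\in K_\nu$, one has $v_2(s)\mathbf{u}=(0,0,u_3)$. Since $v_2(s)y\in V_2 y\subset\overline{Hx}$, the lattice $v_2(s)L_y$ lies in $\overline{Hx}$, and since $d(\theta^n)\in D\subset H$ the lattice $d(\theta^n)v_2(s)L_y$ also lies in $\overline{Hx}$ and contains the nonzero vector $d(\theta^n)(0,0,u_3)=(0,0,\theta^{-n}u_3)$, whose norm $|\theta|^{-n}|u_3|$ tends to $0$ as $n\to+\infty$, contradicting the Mahler lower bound on $\overline{Hx}$.

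For the ``in particular $\mathrm{char}\,K>3$'' clause: case (iii) of Proposition~\ref{cases} explicitly requires $\mathrm{char}\,K=3$ and so is vacuous when $\mathrm{char}\,K>3$; hence (i) or (ii) must hold and the main part of the corollary applies. The principal technical point is the $\mathcal{O}_\nu$-rank computation for $L_y\cap\{u_2=0\}$ in Step 2, which relies only on the structure theorem for finitely generated modules over the DVR $\mathcal{O}_\nu$; everything else---the inverse function theorem and the $d(\theta^3)^n$-inflation in Step 1, and the contraction of $(0,0,u_3)$ by $d(\theta^n)$ in Step 2---is routine.
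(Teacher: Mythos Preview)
Your Step~1 is correct and mirrors the inflation argument at the end of the proof of Proposition~\ref{cases}. The paper is in fact slightly more economical here: Lemma~\ref{l;elementary} already works with an arbitrary open ball in the $V_2$-parameter, so in case~(i) one can feed the ball produced by the inverse function theorem directly into that lemma without first inflating to all of $V_2$.

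Step~2, however, has a real gap. The ring $\mathcal{O}_\nu$ is \emph{not} a DVR: it is the ring of $\nu$-integers of the global field $K$, a Dedekind domain whose fraction field is $K$, not $K_\nu$ (you may be conflating it with $\mathfrak{o}$, the valuation ring of $K_\nu$). Since $K_\nu$ is infinite-dimensional over $K=\mathrm{Frac}(\mathcal{O}_\nu)$, a finitely generated $\mathcal{O}_\nu$-submodule of $K_\nu$ can have $\mathcal{O}_\nu$-rank up to~$3$, and your rank--nullity count collapses. Concretely, take $L=v_1(s)\,\mathcal{O}_\nu^{\,3}$ with $s\in K_\nu\setminus K$: the second row of $v_1(s)$ is $(0,1,s)$, so the condition $a_2+a_3 s=0$ with $a_2,a_3\in\mathcal{O}_\nu\subset K$ forces $a_2=a_3=0$; hence $L\cap\{u_2=0\}=\mathcal{O}_\nu\cdot(1,0,0)$, which has rank~$1$ and contains no vector with $u_3\neq 0$. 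Since $v_2(t)(1,0,0)=(1,0,0)$, applying $V_2$ does not help either.

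The paper sidesteps this by asking for less. Lemma~\ref{l;elementary} uses a covering-radius argument: choose $\ybf\in L_y$ close to $(-t_0w_3,0,w_3)$ with $|w_3|$ large, so that $y_3\neq 0$ and $s=-Q(\ybf)/(2y_3^2)$ lies in the prescribed ball; then $v_2(s)\ybf$ is a nonzero $Q$-isotropic vector in a lattice belonging to $\overline{Hx}$. An isotropic vector can then be contracted inside $H$ (e.g.\ use $V_1^-$ to move it into $K_\nu e_1$ and apply $d(\theta^{-n})$), contradicting Mahler. The single equation $Q=0$ is exactly matched by the one-parameter family $V_2$; your attempt to impose the two conditions $u_1=0$ and $u_2=0$ demands more than $V_2$ alone can deliver.
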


\begin{proof}
By Lemma~\ref{l;elementary} the corollary is immediate if case (ii) holds and follows from  
Lemma~\ref{l;sigma-noncon} and the inverse function theorem if case (i) holds. 
\end{proof}

\textbf{The inseparable case.} The Corollary~\ref{easycases} reduces the proof of Theorem~\ref{orbitclosure} to the case where (iii) in Proposition~\ref{cases} holds. In particular from now on we assume ${\char\hh K}=3.$ We keep all the assumptions and the notation as before. Let us fix some further notation. Define $K_{\nu}^{(3)}=\{k^3|\h k\in K_{\nu}\}.$
This is a subfield of $K_{\nu}$ and $K_{\nu}/K_{\nu}^{(3)}$ is a purely inseparable extension of degree $3.$ For any nonzero element $a\in K_{\nu}$ we will let $V_2^a=V_2(f)$ where $f(t)=at^3$. The subgroups $V_2^a$ and $V^a=V_1V_2^a$ are closed subgroups of $V.$ They are actually unipotent algebraic groups if we change our base field to $K_{\nu}^{(3)}.$ To be more precise one needs to use Weil's restriction of scalars and replace $\mbox{SL}_3$ by $\mbox{R}_{{K_\nu}/K_{\nu}^{(3)}}(\mbox{SL}_3)$  then $V^a$ is a $K_{\nu}^{(3)}$-split unipotent algebraic subgroup of $\mbox{R}_{{K_\nu}/K_{\nu}^{(3)}}(\mbox{SL}_3)(K_{\nu}^{(3)})=G.$ We will try to avoid these and instead will keep our calculations explicit and down to earth. If $\{a,b,c\}$ is a basis for $K_\nu$ over $K_\nu^{(3)},$ then we let ${\rm pr}^{x}_{yz}$ (resp. ${\rm pr}^{xy}_{z}$) denote the projection onto the space $y\hh K_{\nu}^{(3)}+z\hh K_{\nu}^{(3)}$ (resp. $z\hh K_{\nu}^{(3)}$) parallel to $x\hh K_{\nu}^{(3)}$ (resp. $x\hh K_{\nu}^{(3)}+y\hh K_{\nu}^{(3)}$) where $x,y,z$ are distinct elements in $\{a,b,c\}.$  
For a basis $\{a,b,c\}$ of $K_{\nu}$ over $K_{\nu}^{(3)}$ define 
$$W^{b,c}_1=\left\{\left(\begin{array}{ccc}1& 0 & x\\ 0 & 1 & y\\ 0 & 0 & 1\end{array}\right):\h x\in bK_{\nu}^{(3)}+cK_{\nu}^{(3)},\h y\in K_{\nu}\right\}$$

Lemma~\ref{displacement} shows that we can find a sequence $\{g_n\}\subset G-DV$ such that $g_n\rightarrow e$ and $g_nY\cap Y\neq\emptyset.$ We construct a $V_1$-quasiregular map $\phi$ as in Lemma~\ref{gettingextrainv} using this sequence $\{g_n\}.$ The calculations following Lemma~\ref{gettingextrainv} show
\begin{equation}\label{e;char3}\phi(t)=d(\sigma(t))\h v_2\left(\frac{\vt(t)}{\sigma(t)^2}\right)\hspace{1cm}\end{equation} 
Furthermore thanks to Corollary~\ref{easycases} we may and will assume that $\sigma$ is constant map and $\vt(t)=at^3$ for some $a\neq0$ and $V_1V_2^aY=Y.$ Extend $\{a\}$ to a basis $\{a,b,c\}$ for $K_{\nu}$ over $K_{\nu}^{(3)}.$ 

Note that $Y$ is a (minimal) invariant set for $V^a.$ We apply Lemma~\ref{minimal2} with $M=\{g_n\}$ the same sequence as above, $P=V^a$ and $F=V_1.$ Hence $hY=Y$ for all $h\in N_G(V_1)\cap\overline{PMP}.$ 
Write as before
$$g_n=\left(\begin{array}{ccc}a_{1n}& a_{2n} & a_{3n}\\ b_{1n} & b_{2n} & b_{3n}\\ c_{1n} & c_{2n} & c_{3n}\end{array}\right)$$
A simple calculation shows that 
$$N_G(V^a)\subset \{g\in G:\h gV_1g^{-1}\subset V^a\}=DV$$ 
For any $t,s,a\in K_\nu$ let $v^a(t,s)=v_1(t)v_2(as^3).$ Since $\{g_n\}\subset G-DV$  and $N_G(V^a)\subset DV$ we get: $v^a(t,s)g_n(V^a)\subset G/V^a$ is a non-constant polynomial map of bounded degree into some affine space over $K_{\nu}^{(3)},$ here indeed we are using Chevalley's Theorem for the algebraic subgroup $V^a$ of $G$ as we mentioned above. Let $\mathfrak{B}^a$ be some relatively compact neighborhood of the coset $V^a$ in $G/{V^a}.$ Choose $r^a(n)$ such that $V^a_{r^a(n)}g_nV\not\subset\mathfrak{B}^a$ but $V^a_{k}g_nV\subset\mathfrak{B}^a$ for all $k<r^a(n).$ Denote by $\alpha_n^a$ the conjugation with $d(\theta^3)^{r^a(n)}$ and let $\sigma_n^a:V^a\rightarrow B^-,$ $\vt_n^a:V^a\rightarrow W_1^{b,c}$ and $\vt_n:V^a\rightarrow W_1$ be $K_{\nu}$-rational morphisms defined by
$$\alpha_n(v^a(t,s))g_n\beta_n^a(t,s)=\sigma_n^a(t,s)\vt_n^a(t,s)\h\mbox{and}\h\alpha_n(v^a(t,s))g_n\beta_n(t,s)=\sigma_n(t,s)\vt_n(t,s)$$ where $\beta_n^a(t,s)\in V^a$ and $\beta_n(t,s)\in V_1.$ In coordinates we have
$$\sigma_n^a(t,s)=\left(\begin{array}{ccc}\sigma_{11}^n(t,s)& 0 & 0\\ \sigma_{21}^n(t,s) & \sigma_{22}^n(t,s) & 0\\ \sigma_{31}^n(t,s) & \sigma_{32}^n(t,s) & \sigma_{33}^n(t,s)\end{array}\right)$$
$$\vt_n^a(t,s)=\left(\begin{array}{ccc}1& 0 &\vt_{13}^{an}(t,s)\\ 0 & 1 & \vt_{23}^{an}(t,s)\\ 0 & 0 & 1\end{array}\right)\h\mbox{and}\h\vt_n(t,s)=\left(\begin{array}{ccc}1& 0 &\vt_{13}^n(t,s)\\ 0 & 1 & \vt_{23}^n(t,s)\\ 0 & 0 & 1\end{array}\right)$$
Define 
$$\phi_n^a(t,s)=\alpha_n(v^a(t,s))g_n\beta_n^a(t,s)=\sigma_n^a(t,s)\vt_n^a(t,s)$$
Note that this construction fits into the same frame work as in Lemma~\ref{gettingextrainv}. Thus as in the proof of loc. cit. we may pass to the limit and  get a non-constant strongly $V^a$-quasiregular map $\phi^a(t,s)$\vspace{.5mm} such that  $\mbox{Im}(\phi^a)\subset N_G(V^a)\subset DV.$ 
We have 
\begin{itemize}
\item[($\dagger^a$)] $(\sigma_n^a)_{11}(t,s)=a_{1n}+b_{1n}\theta^{3r^a(n)}t+c_{1n}\theta^{6r^a(n)}\frac{t^2+2as^3}{2}=a_{1n}+{b^{'a}_{1n}}t+{c^{''a}_{1n}}\hh\frac{t^2+2as^3}{2}$
\item[($\ddagger^a$)] $\vt_{13}^{an}(t,s)=\frac{{\vt_n(t,s)}^a}{((\sigma_n^a)_{11}(t,s))^3},$ where ${\vt_n(t,s)}^a$ is a polynomial.
\end{itemize}

Passing to the limit we have 
$$\phi^a(t,s)=d(\sigma^a(t,s))\h v_2\left(\frac{\vt^a(t,s)}{(\sigma^a(t,s))^3}\right)$$
We need a more explicit description of ${\vt_n(t,s)}^a.$ Let us recall from $(\ddagger)$ that 
$$\vt_{13}^n(t)=\frac{2(v_1(\alpha_n(t))g_n)_{13}(v_1(\alpha_n(t))g_n)_{11}-((v_1(\alpha_n(t))g_n)_{12})^2}{2((v_1(\alpha_n(t))g_n)_{11})^2}=\frac{\vt_{0n}(t)}{(\sigma_n(t))^2}$$
are pre-limit functions for the construction of $\phi(t)$ in $\overline{V_1MV_1}.$ Similarly we have $\vt_{13}^n(t,s)=\frac{\vt_{0n}(t,s)}{(\sigma_n(t,s))^2}.$ If we apply the renormalization $\mathfrak{t}=\theta^{3r^a(n)}t$ and $\mathfrak{s}^3=\theta^{6r^a(n)}s^3,$ we may write
\begin{equation}\label{e;vt1}\vt_{0n}(\mathfrak{t})=A_{0n}+A_{1n}\mathfrak{t}+A_{3n}\mathfrak{t}^2+A'_{3n}\mathfrak{t}^2/2+A_{4n}\mathfrak{t}^3/2+A'_{4n}(\mathfrak{t}^2/2)^2\end{equation}
\begin{equation}\label{e;vt2}\vt_{0n}(\mathfrak{t},\mathfrak{s})=A_{0n}+A_{1n}\mathfrak{t}+A_{3n}\mathfrak{t}^2+A'_{3n}(\mathfrak{t}^2/2+a\mathfrak{s}^3)+A_{4n}\mathfrak{t}(\mathfrak{t}^2/2+a\mathfrak{s}^3)+A'_{4n}(\mathfrak{t}^2/2+a\mathfrak{s}^3)^2\end{equation}
In view of these formulas ${\vt_n(t,s)}^a$ is 
$${\rm pr}^{a}_{bc}\{\vt_{0n}(\mathfrak{t},\mathfrak{s})(a_{1n}+b^{'a}_{1n}\mathfrak{t}+c^{''a}_{1n}(\mathfrak{t}^2/2+a\mathfrak{s}^3))\}$$  

Recall that thanks to Corollary~\ref{easycases} in the equation~\eqref{e;char3} we have $\sigma(t)=1$ for all $t\in K_\nu$ and  $\vt(t)=at^3$ for some $a\neq0$. Thus $\vt(t)=\lim_n A_{4n}(\theta^{3r(n)}t)^3/2$ where $A_{4n}$ is as in~\eqref{e;vt1}.

We claim that there exists some constant $\bfrak>0,$ depending on $\mathfrak{B}$ chosen in the proof of Lemma~\ref{gettingextrainv} and $\mathfrak{B}^a$ which we chose here, such that $r^a(n)\leq \bfrak\hh r(n).$ To see the claim note that the above paragraph implies that $$\max_{|t|\leq1} A_{4n}(\theta^{3r(n)}t)^3/2\geq C=C(\bfr)\h\h\mbox{for all}\h\h n$$ 
thus $\max_{|t|,|s|\leq1} |A_{4n}\theta^{9r(n)}t(t^{2}/2+as^3)|\geq C$ for all $n.$ 
Also note that for $s\neq0$ the image of the polynomial 
$A_{4n}\theta^{9r(n)}t(t^{2}/2+as^3)),$ as a polynomial of $t,$ contains an open neighborhood of $0$ thus 
$$\lim_n{\rm pr}^{a}_{bc}\{a_{1n}(A_{4n}\theta^{9r(n)}t(t^{2}/2+as^3))\}$$ 
is a nontrivial polynomial. This implies the claim. 

From this claim and $\dagger^a$ we conclude that $\sigma^a(t,s)=1$ is the constant polynomial. Hence 
$$\phi^a(t)=v_2(\vt^a(t,s))\h\h\mbox{and}\h\h\vt^a(t,s)=\lim_n{\rm pr}^a_{bc}(a_{1n}\vt_{0n}(\mathfrak{t},\mathfrak{s}))$$ 
This in view of~\eqref{e;vt2} and the above discussion implies that 
$$\vt^a(t,s)=\lim_n{\rm pr}^a_{bc}\{a_{1n}(A'_{3n}a\mathfrak{s}^3+A_{4n}\mathfrak{t}(\mathfrak{t}^2/2+a\mathfrak{s}^3))\}$$
There are two possibilities: either $\lim_n{\rm pr}^a_{bc}\{a_{1n}(A_{4n}\mathfrak{t}(\mathfrak{t}^2/2+a\mathfrak{s}^3))\}=0$ is the constant polynomial or this limit is non-constant. Let us first assume the later holds. This in particular implies that $\lim_n A_{4n}\theta^{9r^a(n)}=A\neq0.$ Thus we have 
$$\lim_na_{1n}(A_{4n}\mathfrak{t}(\mathfrak{t}^2/2+a\mathfrak{s}^3))=A(t^{3}/2+ats^3)$$
On the other hands ${\rm Im}(A(t^{3}/2+ats^3))$ contains an open neighborhood of $0$ which then implies that $R=a K_\nu^{(3)}+{\rm Im}\vt^a(t,s)$ contains and open subset of $K_\nu.$ Besides $V_2(R)\hh Y=Y$ hence an argument similar to the proof of Proposition~\ref{cases} implies that (ii) in loc. cit. holds and Theorem~\ref{orbitclosure} follows from Corollary~\ref{easycases} in this case.

In view of the above we may and will assume that $\lim_n{\rm pr}^a_{bc}\{a_{1n}(A_{4n}\mathfrak{t}(\mathfrak{t}^2/2+a\mathfrak{s}^3))\}$ is constant. This is to say $\vt^a(t,s)=\lim_n{\rm pr}^a_{bc}(a_{1n}A'_{3n}a(\mathfrak{s}^3))=a's^3$ for some non-zero  $a'\in bK_{\nu}^{(3)}+cK_{\nu}^{(3)}.$ 

Thus $Y$ is invariant under $V^{aa'}=V_1V_2^aV_2^{a'}$ and is indeed $V_1$-minimal. We repeat the construction above one further time i.e. we construct $V^{aa'}$-quasi regular map. A simple calculation shows that $N_G(V^{aa'})\subset DV$ hence $\{g_n\}$ chosen above can be used again. We apply Lemma~\ref{minimal2} with $M=\{g_n\}$ the above sequence, $P=V^{aa'}$ and $F=V_1.$ In view of that lemma we have $hY=Y$ for any $h\in N_G(V_1)\cap\overline{PMP}.$ We use the notations as above with the obvious modification e.g. $\phi^a$ there is replaced by $\phi^{aa'}$ in here etc. Same argument as above gives: there exists some $\bfrak'>0$ such that $r^{aa'}(n)\leq \bfrak'\hh r(n).$ Hence $\sigma^{aa'}(t,s,r)=1$ is constant and  
$$\phi^{aa'}(t,s,r)=v_2(\vt^{aa'}(t,s,r))$$  

Let $\{a,a',a''\}$ be a basis for $K_\nu$ over $K_\nu^{(3)}.$ Define the renormalized variables $\mathfrak{t}=\theta^{3r^{aa'}(n)}t,$ $\mathfrak{s}^3=\theta^{6r^{aa'}(n)}s^3$ and $\mathfrak{r}^3=\theta^{6r^{aa'}(n)}r^3.$ We have
$$\vt^{aa'}(t,s,r)=\lim_n{\rm pr}^{aa'}_{a''}\{a_{1n}(A'_{3n}(a\mathfrak{s}^3+a'\mathfrak{r}^3)+A_{4n}\mathfrak{t}(\mathfrak{t}^2/2+a\mathfrak{s}^3+a'\mathfrak{r}^3))\}$$

Now either $\lim_n{\rm pr}^{aa'}_{a''}(a_{1n}A_{4n}\mathfrak{t}(\mathfrak{t}^2/2+a\mathfrak{s}^3+a'\mathfrak{r}^3)))=0$ is constant or a non-constant polynomial. If the later holds the argument goes through the same lines of the argument above. If the first holds then 
$$\vt^{aa'}(t,s,r)=\lim_n{\rm pr}^{aa'}_{a''}\{a_{1n}(A'_{3n}(a\mathfrak{s}^3+a'\mathfrak{r}^3))\}$$ 
Thus $\vt^{aa'}(t,s,r)=a''(l_1s^3+l_2r^3)$ where $l_i\in K_\nu^{(3)}$ and at least one of them is nonzero. Hence we get $Y$ is invariant under $V_2^{a''}$ and since $\{a,a',a''\}$ is a basis for $K_{\nu}$ over $K_{\nu}^{(3)}$ we have $V_2=V_2^aV_2^{a'}V_2^{a''}.$ This gives $V_2Y=Y$ i.e. we are in case (ii) of Proposition~\ref{cases}. Thus Corollary~\ref{easycases} finishes the proof.



\begin{thebibliography}{ZZZZ}

\bibitem[B91]{B}A.~Borel,
{\it Linear algebraic groups.}
second enlarged edition. Berlin Heidelberg, New York, Springer 1991.

\bibitem[BP92]{BP}A.~Borel, G.~Prasad,
{\it Values of isotropic quadratic forms at S-integral points.}
Compositio Math. \textbf{83} (1992), 347-372. 

 \bibitem[DM89]{DM1}S.~G. Dani, G.~A.~Margulis,
 {\it Values of quadratic forms at primitive integral points.}  
 Invent. Math.  \textbf{98} no. 2 (1989), 405--424.

\bibitem[DM90]{DM2}S.~G. Dani, G.~A.~Margulis, 
{\it Orbit closures of generic unipotent flows on homogeneous spaces of ${\rm SL}(3,R)$.}  
Math. Ann.  \textbf{286} no. 1-3 (1990),  101--128


\bibitem[DH46]{D1}H.~Davenport, H.~Heilbornn,
{\it On indefinite quadratic forms in five variables.}
J. London Math. Soc. \textbf{21} (1946), 185-193.

\bibitem[DR59]{D2}H.~Davenport, H.~Ridout,
{\it Indefinite quadratic forms.}
Proc. London Math. Soc.  (3) \textbf{9} (1959), 544-555.

\bibitem[M06]{K}K.~Mallahi~Karai,
{\it Private communication.}

\bibitem[Mar86]{Mar1}G.~A.~Margulis,
{\it Indefinite quadratic forms and unipotent flows on homogeneous spaces.}
Proceed of ``Semester on dynamical systems and ergodic theory" (Warsa 1986) 399--409, Banach Center Publ., \textbf{23}, PWN, Warsaw, (1989).

\bibitem [Mar71] {Mar2}G.~A.~Margulis,  
{\it On the action of unipotent groups in the space of lattices.} 
In Gelfand, I.M. (ed.) Proc. of the summer school on group representations. Bolyai 
Janos Math. Soc., Budapest, 1971, 365-370. Budapest: Akademiai Kiado (1975)

\bibitem[Mar87a]{Mar3}G.~A.~Margulis, 
{\it Formes quadratiques indefinies et flots unipotents sur les espaces homog\`{e}nes.} C.R. Acad. Sci., Paris, Ser. I \textbf{304} (1987) 249--253 (1987) 

\bibitem[MT94]{MT}G.~A.~Margulis, G.~Tomanov,
{\it Invariant measures for actions of unipotent groups over local fields on homogeneous spaces.} Invent. Math.  \textbf{116} no. 1-3 (1994), 347--392.

\bibitem[O29]{O1}A.~Oppenheim,
{\it The minima of indefinite quaternary quadratic forms.}
Proc. Nat. Acad. Sci. USA \textbf{15} (1929), 724-727.

\bibitem[O31]{O2}A.~Oppenheim,
{\it The minima of indefinite ternary quadratic forms.}
Ann. of Math. \textbf{32} (1931), 271-288

\bibitem[R83]{Rat1}M.~Ratner, 
{\it Horocycle flows: joining and rigidity of products.} 
Ann. Math. \textbf{118} (1983) 277-313

\bibitem[R90a]{R1} M.~Ratner, 
{\it Strict measure rigidity for unipotent subgroups of solvable groups.} 
Invent. Math. \textbf{101} (1990) 449-482 

\bibitem[R90b]{R2}M.~Ratner,  
{\it On measure rigidity of unipotent subgroups of semi-simple groups.} 
Acta, Math. \textbf{165} (1990) 229 -309  

\bibitem[R91]{R3}M.~Rather, 
{\it Raghunathan topological conjecture and distributions of unipotent flows.} 
Duke Math. J. \textbf{63} (1991) 235-280 

\bibitem[R92]{R4} M.~Ratner, 
{\it On Raghunathan's measure conjecture.} 
Ann. Math. \textbf{134} (1992) 545-607 

\bibitem[R95]{R5} M.~Ratner, 
{\it Raghunathan's conjectures for Cartesian products of real and p-adic Lie groups.} 
Duke Math. Journal \textbf{77} No. 2 (1995) 275-382.



\end{thebibliography}
\end{document}